\newcommand{\eps}{\varepsilon}
\renewcommand{\H}{\mathcal{H}}
\newcommand{\N}{\mathbb{N}}
\newcommand{\R}{\mathbb{R}}
\newcommand*{\genbf}[1]{\ifmmode\mathbf{#1}\else\textbf{#1}\fi}
\newcommand{\cH}{\mathcal H}
\newcommand{\G}{\mathbb G}
\newcommand{\cC}{\mathcal C}
\newcommand{\ch}{\mathfrak h}
\newcommand{\bH}{\mathbb H}
\newcommand{\sm}{\setminus}
\newcommand{\cG}{\mathcal G}
\newcommand{\beq}{\begin{equation}}
\newcommand{\eeq}{\end{equation}}
\newtheorem{theorem}{Theorem}[section]
\newtheorem{lemma}[theorem]{Lemma}
\newtheorem{proposition}[theorem]{Proposition}
\theoremstyle{definition}
\newtheorem{definition}[theorem]{Definition}
\newtheorem{remark}[theorem]{Remark}
\title{$C^{1,\alpha}$-rectifiability in low codimension in Heisenberg groups}
\author{Kennedy Obinna Idu}
\address{University of Toronto, Department of Mathematics, Toronto, ON M5S 2E4, Canada.}
\email{\href{mailto:idu@math.toronto.edu}{idu@math.toronto.edu}}
\author{Francesco Paolo Maiale}
\address{Gran Sasso Science Institute, Viale F. Crispi 7, 67100 L'Aquila, Italy.}
\email{\href{mailto:francescopaolo.maiale@gssi.it}{francescopaolo.maiale@gssi.it}}
\urladdr{\url{https://sites.google.com/site/francescopaolomaiale/}}
\date{}
\begin{document}
\begingroup
\def\uppercasenonmath#1{} % this disables uppercasing title
\let\MakeUppercase\relax % this disables uppercasing authors
\maketitle
\endgroup

%\maketitle

\begin{abstract}
A natural higher-order notion of $C^{1,\alpha}$-rectifiability, $0 < \alpha \leq 1$, is introduced for subsets of the Heisenberg groups $\bH^n$ in terms of covering a set almost everywhere with a countable union of $(\mathbf{C}_H^{1,\alpha},\bH)$-regular surfaces. Using this, we prove a geometric characterization of $C^{1,\alpha}$-rectifiable sets of low-codimension in Heisenberg groups $\bH^n$ in terms of an almost everywhere existence of suitable approximate tangent paraboloids. 
\end{abstract}

\vspace{1cm}
 {\footnotesize
 \textbf{MSC (2010):} 28A75 (primary); 43A80, 53C17 (secondary).

 \textbf{Keywords:} Higher order, approximate tangent paraboloids, metric spaces}.

%%%%%%%%%%%%%%%%%%%%%%%%%%%%%
\section{Introduction}

Rectifiable sets are focal to studies in geometric measure theory and admit various applications in several branches of mathematical analysis. Interest in such sets arises mainly for their geometric, measure-theoretic, and analytic properties, which include a notion of (approximate) tangent spaces defined almost everywhere, a version of the area and coarea formulas (see \cite{AmbKir2000Rect} and \cite{kirchheim1994rectifiable}), and a framework for studying boundedness of a class of singular integral operators (see, e.g., \cite{chousionis2019boundednes,DS91,DS93}).

In metric spaces, particularly Carnot groups, the definition of rectifiability diverges along several, not necessarily equivalent, directions (see, e.g., \cite{FSSC4, pauls2004notion, AntLeD19, FranchiSerapioni2016IntrLip}). The original definition by Federer \cite[Section 3.2.14]{Federer69} is in terms of composing a set with countably many Lipschitz images of subsets of the Euclidean space $\R^n$. This is adopted in \cite{AmbKir2000Rect} and shown to be inappropriate in general metric spaces considering even the basic setting of the Heisenberg group $\bH^1$. In \cite{MSSC10}, Mattila et al. defined rectifiability in the Heisenberg group $\bH^n$ considering a countable union of $C^1$ $\bH$-regular surfaces. This is related to using notions of regular surfaces in the sense of Franchi, Serapioni, and Serra Cassano (see, e.g., \cite{FSSC01,FSSC6,FSSC11}). Several results can be found on characterizations and basic properties of rectifiable sets in Euclidean and general metric spaces (see, e.g., \cite{AmbKir2000Rect, Federer69, Federer1978Colloquium, DeLellis08, Mattila, MSSC10}). A well-known characterization in the Heisenberg group $\bH^n$ is the a.e. existence of approximate tangent spaces \cite{MSSC10}. This is in the spirit of the Euclidean analog, which is in terms of an almost everywhere existence of approximate tangent planes (see, e.g., \cite[Corollary 15.16]{Mattila}).

A missing piece in the study of rectifiability in metric spaces is the natural notion of higher-order rectifiability, which can be defined in terms of composing a set with countably many objects of higher order regularity defined appropriately. Motivated by the seminal work of Anzellotti and Serapioni \cite{AnzSer} in the Euclidean setting, and also the recent study of Del~Nin and the first named author \cite{ninidu1}, our goal in this article is to initiate progress along this line in the metric setting of Heisenberg groups. We introduce a notion of $C^{1,\alpha}$-rectifiability, $0 < \alpha \leq 1$, defined in terms of composing a set with countably many $(C^{1,\alpha}_H,\bH)$-regular surfaces (see \Cref{Def:RegSurf} and \Cref{def:rec}). 

Using this, we address the problem of characterization of $C^{1,\alpha}$-rectifiable sets in a metric setting. An interesting, and perhaps gratifying, discovery is proving that the analogous geometric criterion of {\itshape approximate tangent paraboloids} as in the Euclidean characterization of $C^{1,\alpha}$-rectifiable sets, $0<\alpha\le 1$ (see \cite{ninidu1, San19}) applies in the setting of low-codimensional sets of the Heisenberg groups $\bH^n$.

Throughout the paper we write $k$ and $k_m$ as the {\itshape dimension} and {\itshape metric dimension} respectively, $\cG(\bH^n,k)$ is the Grassmannian of $k$-dimensional subgroups (see \Cref{def.grass}) and denote by $Q_\alpha(p,V,\lambda)$ the $\alpha$-paraboloid centered at $p$ with base $V$ and dilation $\lambda$ (see \Cref{def.objects}). We now state the main result of this paper:

\begin{theorem} \label{thm.1.1} 
Fix $\alpha \in (0,1]$ and $n<k\le 2n$. Let $E \subset \bH^n$ be a $\H^{k_m}$-measurable set with $\H^{k_m}(E) < \infty$ such that for $\H^{k_m}$-a.e. $p \in E$ there are $V_p \in \cG(\bH^n,k)$ and $\lambda > 0$ such that
\beq\label{eq.paraboloidassumption}
\lim_{r \to 0^+} \frac{1}{r^{k_m}} \H^{k_m} \left( E \cap B(p,r) \setminus Q_\alpha(p,V_p,\lambda) \right) = 0.
\eeq
Assume, in addition, that for $\H^{k_m}$-a.e. $p \in E$ there holds
\beq\label{eq.densityposit}
\Theta_\ast^{k_m}(E,p) > 0.
\eeq
Then $E$ is $C^{1,\alpha}$-rectifiable in the sense of \Cref{def:rec}.
\end{theorem}

Next, we prove that the converse also holds. As above, we are in the low-codimensional setting so $n < k \le 2n$.

\begin{proposition} \label{prop.1.2}
If $E \subset \bH^n$ is a $C^{1,\alpha}$-rectifiable set with $\H^{k_m}(E) < \infty$, then for $\H^{k_m}$-a.e. $p \in E$ there exist $V_p \in \cG(\bH^n,k)$ and $\lambda=\lambda_p > 0$ such that
\[
\lim_{r \to 0^+} \frac{1}{r^{k_m}} \H^{k_m} \left( E \cap B(p,r) \setminus Q_\alpha(p,V_p,\lambda) \right) = 0.
\]
\end{proposition}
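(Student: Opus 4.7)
The plan is to reduce the claim to a pointwise statement on a single $(\mathbf{C}_H^{1,\alpha},\bH)$-regular surface and then combine with a density argument to discard the rest.

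First, I would use the definition of $C^{1,\alpha}$-rectifiability to write $E = E_0 \cup \bigcup_{i \in \N} S_i$ with $\H^{k_m}(E_0) = 0$ and each $S_i$ a $(\mathbf{C}_H^{1,\alpha},\bH)$-regular surface. Since $\H^{k_m}(E) < \infty$, standard density-type results in the low-codimensional setting (which are available in this paper's framework for low-codimensional surfaces, analogously to the $C^1$ case proved in \cite{MSSC10}) give that for $\H^{k_m}$-a.e.\ $p \in E$ one can select an index $i = i(p)$ with $p \in S_i$ and
\[
\lim_{r \to 0^+} \frac{1}{r^{k_m}} \H^{k_m}\!\left( (E \setminus S_i) \cap B(p,r) \right) = 0,
\]
so the ``bad'' part of $E$ becomes negligible in scale $r^{k_m}$.

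Next I would localize on $S_i$ near $p$ via its intrinsic graph representation. Because $S_i$ is $(\mathbf{C}_H^{1,\alpha},\bH)$-regular, in a neighborhood of $p$ the surface can be written, after a left translation placing $p$ at the origin, as an intrinsic graph whose intrinsic gradient is $\alpha$-Hölder with some constant $L$, and with tangent plane $V_p \in \cG(\bH^n, k)$. A direct estimate on the parametrization (integrating the $\alpha$-Hölder bound on the intrinsic derivative along admissible curves) then yields a small $r_0 > 0$ and a constant $\lambda = \lambda(L) > 0$ such that
\[
S_i \cap B(p,r) \subset Q_\alpha(p,V_p,\lambda) \qquad \text{for every } 0 < r < r_0.
\]
Combining with the previous step, for such $r$,
\[
E \cap B(p,r) \setminus Q_\alpha(p,V_p,\lambda) \subset (E \setminus S_i) \cap B(p,r),
\]
and dividing by $r^{k_m}$ and letting $r \to 0^+$ gives the claim.

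For the sharpening $\alpha = 1$, I would observe that intrinsic $C^{1,1}$-regularity together with first-order intrinsic differentiability at $p$ gives that the remainder in the graph parametrization is not merely controlled by $\lambda$ but is $o(r)$ pointwise along the surface at $\H^{k_m}$-a.e.\ point $p$; so for any fixed $\lambda > 0$, one can choose $r_0 = r_0(p, \lambda)$ so that $S_i \cap B(p,r) \subset Q_1(p,V_p,\lambda)$ for $r < r_0$, and the previous density argument still closes the estimate.

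The main obstacle I expect is the second step: quantitatively translating the intrinsic $C^{1,\alpha}$ control of the defining map (or equivalently of the horizontal normal) into the explicit paraboloid containment $S_i \cap B(p,r) \subset Q_\alpha(p,V_p,\lambda)$. This requires being careful with the non-commutative group structure (distances, dilations, the extra ``vertical'' direction carrying metric dimension $k+1$), and in particular with the fact that the Hölder norm used in the definition of $\mathbf{C}_H^{1,\alpha}$-regular surfaces must match the exponent appearing in the definition of the paraboloid $Q_\alpha$; any mismatch would force a worse exponent in \eqref{eq.paraboloidassumption} and break the converse.
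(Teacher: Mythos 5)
Your outline matches the paper's proof of Proposition \ref{prop.1.2}: decompose $E$ over the covering surfaces, use a standard density lemma to get $\Theta^{k_m}(E\setminus (E\cap \Gamma_i),p)=0$ at $\H^{k_m}$-a.e.\ $p\in E\cap\Gamma_i$, and reduce everything to containing $\Gamma_i\cap B(p,r)$ in a paraboloid for small $r$. The step you single out as the main obstacle is precisely the paper's Lemma \ref{ConvGeomLem}, which is proved not via the intrinsic graph parametrization but via the level-set description of the surface: the stratified Taylor inequality \eqref{TaylorExp} gives $\|d_Hf_p(p^{-1}q)\|_{\R^{2n+1-k}}=\mathcal{O}(d(p,q)^{1+\alpha})$ for $q$ on the surface, while $H$-linearity and the splitting $\ker(d_Hf_p)\cdot V=\bH^n$ give $\|d_Hf_p(p^{-1}q)\|_{\R^{2n+1-k}}\ge c\,d(p^{-1}q,T_{\bH}S(p))$; combining the two yields the containment with $\lambda$ depending on the H\"older constant, and this route sidesteps the exponent-matching concerns you raise about the graph map. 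The one place your argument does not close is the $\alpha=1$ addendum: first-order Pansu differentiability gives a remainder $o(d(p,q))$, which produces cones of arbitrarily small aperture, whereas membership in $Q_1(p,V_p,\lambda)$ for arbitrarily small $\lambda$ requires a remainder of order $o(d(p,q)^2)$, and $\mathbf{C}_H^{1,1}$ regularity only gives $\mathcal{O}(d(p,q)^2)$ with a fixed constant. Some additional argument (or a restriction to a.e.\ point) is needed there; note that the paper's own proof is also silent on this refinement.
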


\begin{remark}
In the Euclidean setting \cite[Theorem 1.1]{ninidu1} the positive lower density condition \eqref{eq.densityposit} is recovered from the approximate tangent paraboloid condition; however, in our setting, we have to impose the explicit requirement.

A similar issue arises in the characterization of $k$-rectifiable sets of low codimension in $\bH^n$ (see \cite[Theorem 3.15]{MSSC10}), where the positive lower density condition is required and it is asked if this can be removed. This, to the best of our knowledge, remains open.
\end{remark}

The proof of \Cref{thm.1.1} draws ideas from \cite[Lemma 3.5]{ninidu1} by first recovering the H\"{o}lder regularity of the distribution of subgroups (\Cref{lemma:keyHn}) using the density conditions. This is adeptly done with technical modifications of some analogous Euclidean lemmas and arguments which are necessary because of the more delicate structure of the Heisenberg groups. After proving the existence of a H\"{o}lder continuous selection of horizontal vector fields corresponding to the horizontal complements of the distribution of vertical subgroups we further adapt similar technique in \cite{MSSC10} and use a standard decomposition argument (based on the density conditions). %Further, the application of the H\"older-regularity result and inclusion in paraboloids establishes the convergence of the sequence of Whitney functions, from which the conclusion follows via standard approximation results and the Whitney extension theorem.

It is interesting to notice that most of the technical points exploited to prove \Cref{thm.1.1} can be easily extended to a general Carnot group. Indeed, the structure of the Heisenberg group $\bH^n$ only plays a fundamental role in \Cref{prop.str} and, consequently, in the H\"older-continuity result \Cref{lemma.2.32.holder}. We will discuss this possible extension in more details in \Cref{sec:carnot}.

\subsection{Comparison with Euclidean analogue}

\Cref{thm.1.1}, to the best of our knowledge, is the first result in the direction of higher-order rectifiability in metric spaces. The result is inspired by the Euclidean analog \cite[Theorem 1.1]{ninidu1} in a joint work by Del~Nin and the first named author and the question of extension to non-Euclidean spaces. In the Heisenberg groups $\bH^n$, the idea comes to first understand the intricacies of the space and to consider alternative routes to deploying the strategy in the Euclidean setting. This requires a shrewd approach and numerous detours due to the pathological geometry of the Heisenberg groups, which includes nonlinearity of vertical projections, dimensional distortion under projections, and the lack of a Whitney extension result for mappings into $\bH^n$. 

The strategy used in the Euclidean setting relies on two key results: the first is proving the H\"{o}lder-continuity of the distribution of planes using an approximate tangent paraboloid condition (see \cite[Lemma 3.5]{ninidu1}); the second is a geometric Whitney extension lemma which exploits the H\"{o}lder-regularity result to cover pieces of the set in paraboloids with a precise number of $C^{1,\alpha}$ graphs (see \cite[Lemma 3.3]{ninidu1}).

The inherent hitches in the Heisenberg framework force the satisfactory scheme above to collapse. However, it turns out the collapse is not total: thanks to a similar inclusion result for the intersection of cylinders, we recover the H\"{o}lder-continuity result. The geometric Whitney-type argument fails in this setting mainly due to the lack of a Whitney extension theorem for maps with $\bH^n$ as the target space. At this point, we take a detour. Perhaps the miracle comes from the idea which is based on the crucial observation that in low codimensions, there is a choice of horizontal vector fields corresponding to the horizontal complements of the distribution of vertical subgroups. Moreover, we show that under our working assumptions, it is possible to make this choice in a H\"{o}lder-continuous way. This insight opens a new pathway to pull certain standard arguments and to establish our results.

\subsection{Structure of the paper}

In \Cref{sec:preliminaries}, we briefly recall the main properties of the Heisenberg group, the definition of the Hausdorff measure with some useful density results, and Pansu's differentiability for general Carnot groups. 

In \Cref{sec:rectif}, we introduce our notion of $C^{1,\alpha}$-rectifiability via regular surfaces in the sense of \Cref{Def:RegSurf}. Next, in \Cref{sec:intgras}, following \cite{MSSC10}, we define the intrinsic Grassmannian and give a characterization theorem (see \Cref{prop.str}). 

In \Cref{sec:intlemma} we prove several technical results concerning $\alpha$-paraboloids and cylinders that will be used in the proof of our main result.

Finally, in \Cref{sec:proof} we give a proof to \Cref{prop.1.2} and, after additional technical results, also \Cref{thm.1.1}.

%%%%%%%%%%%%%%%%%%%%%%%%%%%%%%%%%%%%%%
\section{Preliminaries}\label{sec:preliminaries}

We shall restrict to the essential notions in our framework. For more details on Carnot groups and, in particular, Heisenberg groups, the reader may refer to \cites{BLU2007stratified,CDPT07}. \\

The Heisenberg group $\bH^n$ is the Carnot group whose Lie algebra $\ch^n$ admits a step two stratification
\[
\ch^{n}=\ch_{1} \oplus \ch_{2},
\]
where $\ch_{1}=\operatorname{span}\left\{X_{1}, \ldots, X_{n}, Y_{1}, \ldots, Y_{n}\right\}$ and $\ch_{2}=\operatorname{span}\{T\}$ with commutators
\[
\left[X_{i}, Y_{j}\right]= \delta_{ij} T \qquad \text{and} \qquad \left[X_{i}, X_{j}\right]=\left[Y_{i}, Y_{j}\right]=0.
\]
The vector fields $X_{1}, \ldots, X_{n}, Y_{1}, \ldots, Y_{n}$ span a vector subbundle of the tangent vector bundle $\mathrm{T}\bH^{n}$, the so-called horizontal vector bundle $\mathrm{H}\bH^{n}$. Moreover, we can identify $\bH^n$ with $\R^{2n+1}$ via exponential coordinates and we can express the group law using the {\itshape Baker-Campbell-Hausdorff formula} as follows:
\[
p \cdot q:=\left(p^{\prime}+q^{\prime}, p_{2 n+1}+q_{2 n+1}-2 \sum_{i=1}^{n}\left(p_{i} q_{i+n}-p_{i+n} q_{i}\right)\right),
\]
where $p':=\left(p_{1}, \cdots, p_{2 n}\right)$. Therefore, the inverse of $p$ is given by 
\[
p^{-1}=\left(-p^{\prime},-p_{2 n+1}\right),
\]
and $e=\mathbf{0}$ is the identity of $\bH^{n}$. The center of $\bH^{n}$ is the subgroup 
\[
\mathbb{T}:=\left\{ p \in \bH^n \: :\: p_i = 0 \text{ for all } i = 1,\ldots,2n\right\}.
\]
For any $q \in \bH^n$ and $r>0$, we denote by $\tau_{q}: \bH^n \to \bH^n$ the corresponding left-translation $p \mapsto q \cdot p=:\tau_{q}(p)$ and by $\delta_{r}: \bH^n \to \bH^n$ the dilation
\[
\delta_r(p) := (rp', r^2 p_{2n+1}).
\]
We denote by $\| \cdot \|$ the homogeneous (with respect to dilations) norm 
\[
\|p\|:= \max \left\{\left\|p'\right\|_{\R^{2n}},\left|p_{2 n+1}\right|^{1/2}\right\},
\]
where $\|\cdot\|_{\R^{2n}}$ denotes the standard euclidean norm on $\R^{2n}$, and by $d$ the corresponding metric, namely
\[
d(p, q)=d\left(q^{-1} p, e\right)=\left\|q^{-1} p\right\| \qquad \text{for all } p,q \in \bH^n.
\]
We conclude this preliminary section by recalling the definition of the Hausdorff measure in metric spaces and some density results.

\begin{definition}
Let $E \subset \bH^n$ and $k\in (0,\infty)$. The {\bfseries $k$-dimensional Hausdorff measure} of $E$, denoted by $\cH^k(E)$, is defined by setting
\[
\cH^k(E):=\sup_{\delta>0}\cH_{\delta}^k(E),
\]
where $\cH_{\delta}^k(E)=\inf \left\{ \sum_i 2^{-k}\mathrm{diam}(E_i)^k: E\subset \bigcup_i E_i, \mathrm{diam}(E_i)\le \delta\right\}$.
\end{definition}

\begin{definition}
Let $E \subset \bH^n$ be a $\cH^k$-measurable set. The upper and lower $k$-density of $E$ at any point $p\in \bH^n$ are defined, respectively, as follows:
\[ \begin{aligned}
& \Theta^{\ast k}(E, p)= \limsup_{r \to 0} \frac{\cH^k(E\cap B(p, r))}{r^k},
\\ & \Theta_\ast^k(E, p)= \liminf_{r \to 0} \frac{\cH^k(E\cap B(p, r))}{r^k}.
\end{aligned}\]
\end{definition}

We now recall some standard density estimates for Hausdorff measures which can be found, for example, in \cite[2.10.19]{Federer69}. 

\begin{lemma} \label{lemma:densities}
Let $E \subset \bH^n$ be $\cH^k$-measurable with $\cH^k(E)<+\infty$. Then
\begin{enumerate}[itemsep =0.4em, label=(\roman*)]
\item for $\cH^k$-a.e. $p \in E$, we have $2^{-k} \le \Theta^{\ast k}(E, p) \le 5^k$;
\item for $\cH^k$-a.e. $p \in\bH^n\sm E$, we have $\Theta^{\ast k}(E, p) = 0$.
\end{enumerate}
\end{lemma}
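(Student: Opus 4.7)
The plan is to derive all three bounds from the Vitali $5r$-covering lemma in the metric space $(\bH^n,d)$, combined with the inner/outer regularity of the finite Borel measure $\cH^k \res E$. The factor $5^k$ in the upper bound of (i) and in (ii) is the price of $5r$-covering in place of Besicovitch; the $2^{-k}$ in the lower bound of (i) reflects the $2^{-k}\mathrm{diam}^k$ normalization chosen for $\cH^k$.

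For the upper bound in (i), fix $t > 5^k$ and consider $A_t := \{p \in E : \Theta^{\ast k}(E,p) > t\}$, noting $\cH^k(A_t) \le \cH^k(E) < \infty$. For each $\eta > 0$, outer regularity yields an open $U \supset A_t$ with $\cH^k(E\cap U) \le \cH^k(A_t) + \eta$, and for every $p \in A_t$ one can pick arbitrarily small $r_p$ with $B(p,r_p) \subset U$ and $\cH^k(E \cap B(p,r_p)) > t\, r_p^k$. The $5r$-covering lemma produces a countable disjoint subfamily $\{B(p_i,r_i)\}_i$ with $A_t \subset \bigcup_i B(p_i, 5 r_i)$; summing the $r_i^k$, substituting via $r_i^k < \cH^k(E\cap B(p_i,r_i))/t$, and using disjointness together with $B(p_i,r_i)\subset U$ yields $\cH^k(A_t) \le (5^k/t)\bigl(\cH^k(A_t) + \eta\bigr)$. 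Sending $\eta \to 0$ and using $5^k/t < 1$ forces $\cH^k(A_t) = 0$.

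For the lower bound in (i), fix $t < 2^{-k}$ and set $A_{t,\rho} := \{p \in E : \cH^k(E \cap B(p,r)) < t\, r^k \text{ for every } r \le \rho\}$. Since the monotone union over $\rho > 0$ contains $\{p \in E : \Theta^{\ast k}(E,p) < t\}$, it suffices to prove $\cH^k(A_{t,\rho}) = 0$. For any cover $\{U_i\}$ of $A_{t,\rho}$ by sets of diameter $\le \delta \le \rho$, I would pick $p_i \in U_i \cap A_{t,\rho}$ (whenever non-empty) and note $U_i \subset B(p_i, \mathrm{diam}\, U_i)$, so $\cH^k(E\cap U_i) \le t\,\mathrm{diam}(U_i)^k = 2^k t\cdot 2^{-k}\mathrm{diam}(U_i)^k$. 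Summing and infimizing over covers yields $\cH^k(A_{t,\rho}) \le 2^k t\, \cH^k(A_{t,\rho})$, which closes the bound since $2^k t < 1$ and the measure is finite.

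Part (ii) follows by the same $5r$-covering scheme applied to $A_t := \{p \in \bH^n \sm E : \Theta^{\ast k}(E,p) > t\}$, now exploiting the disjointness $A_t \cap E = \emptyset$. By inner regularity of $\cH^k \res E$ on the separable space $\bH^n$, approximate $E$ from inside by a closed set $F \subset E$ with $\cH^k(E \sm F) \le \eta$, and further restrict the radii to satisfy $5 r_p < d(p,F)$, which is possible because $F$ is closed and $p \notin F \subset E$. The resulting disjoint balls $B(p_i,r_i)$ then avoid $F$, so
\[
\sum_i \cH^k(E \cap B(p_i,r_i)) \le \cH^k(E \sm F) \le \eta,
\]
and the usual chain of estimates gives $\cH^k(A_t) \le (5^k/t)\eta \to 0$. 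The only non-routine ingredient, and the mildly delicate step, is this inside approximation by a closed set—the standard inner regularity of the finite Borel measure $\cH^k\res E$ in the separable metric space $\bH^n$.
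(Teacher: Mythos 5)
Your proof is correct; note that the paper does not prove this lemma at all, but simply cites it as a standard density estimate from Federer [2.10.19]. Your argument — the $5r$-covering lemma plus outer regularity for the upper bound in (i), the direct covering comparison for the lower bound $2^{-k}$, and inner approximation of $E$ by a closed set for (ii) — is precisely the classical proof behind that citation, with the constants $5^k$ and $2^{-k}$ coming out exactly as stated for the paper's normalization $\cH^k_\delta = \inf \sum 2^{-k}\mathrm{diam}(E_i)^k$ and density quotient $r^{-k}\cH^k(E\cap B(p,r))$. The only points worth tightening are routine: in the lower-bound step one should discard covering sets with $\mathrm{diam}(U_i)=0$ (singletons are $\cH^k$-null for $k>0$), and in both covering arguments one must let the radii tend to $0$ so that the bound on $\cH^k_\delta(A_t)$ passes to $\cH^k(A_t)$ — which your phrase ``arbitrarily small $r_p$'' already accommodates.
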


\vspace{.2cm}

\subsection*{Differentiability}
Let $\Omega$ be an open subset of $\bH^{n}$, $0< \alpha\le 1$ and $m \geq 0$ a non-negative integer. Following the notation of \cite[Section 2.2]{MSSC10}, we denote by $\mathbf{C}^{m,\alpha}(\Omega)$ the space of $m$ times continuously differentiable real-valued functions with $\alpha$-H\"{o}lder continuous $m$th-derivative in the Euclidean sense via the identification $\bH^n\equiv \mathbb{R}^{2n+1}$.
% Moreover, we denote by $\mathbf{C}^{m}\left(\Omega, \mathrm{H\bH}^{n}\right)$ the set of all $C^{m}$-sections of $\mathrm{H}\bH^{n}$ in the sense of regularity between smooth manifolds.

\begin{definition}
Let $f \in \mathbf{C}^{1}(\Omega)$. We define the horizontal gradient of $f$ as
\[
\nabla_{H} f:=\left(X_{1} f, \ldots, X_{n} f, Y_{1} f, \ldots, Y_{n} f\right)
\]
or, equivalently, as the section of the horizontal bundle $\mathrm{H}\bH^{n}$
\[
\nabla_{H} f:=\sum_{j=1}^{n}\left(X_{j} f\right) X_{j}+\left(Y_{j} f\right) Y_{j},
\]
with canonical coordinates $\left(X_{1} f, \ldots, X_{n} f, Y_{1} f, \ldots, Y_{n} f\right)$.
\end{definition}

\begin{definition}\label{Def:CH-regular}
A continuous function $f: \Omega \to \R$ belongs to $\mathbf{C}_{H}^{1}(\Omega)$ if the horizontal gradient $\nabla_{H} f$ exists and is continuous in $\Omega$. Furthermore, for any $0< \alpha\le 1$, if $\nabla_H f$ is $\alpha$-H\"{o}lder continuous (with respect to the homogeneous norm) i.e. 
\[
{\sup_{x,y\in \Omega, x\ne y}}\frac{\|\nabla_H f(x)-\nabla_H f(y)\|_{\R^{2n}}}{\|x^{-1}y\|} < \infty
\]
then we say that $f\in \mathbf{C}_{H}^{1,\alpha}(\Omega)$.  \\

%Let $\Omega \subset \bH^n$ be an open set and $f:\Omega \to\R$ a continuous function. We say that $f\in \mathbf{C}_{H}^{1}(\Omega)$ if $\nabla_{H} f$ exists and is continuous in $\Omega$. Furthermore, if $\nabla_H f \in C^{0,\alpha}$ for some $0< \alpha\le 1$, then we say that $f\in \mathbf{C}_{H}^{1,\alpha}(\mathcal{U})$. 
\noindent We denote by $\left[\mathbf{C}_{H}^{1}(\Omega)\right]^{l}$ the set of $l$-tuples $f=\left(f_{1}, \ldots, f_{l}\right)$ such that $f_{i} \in \mathbf{C}_{H}^{1}(\Omega)$ for each $1 \leq i \leq l$; similarly, we also introduce the notation $\left[\mathbf{C}_{H}^{1,\alpha}(\Omega)\right]^{l}$.
\end{definition}

\begin{remark}
	The inclusion $\mathbf{C}^{1,\alpha}(\mathcal{U})\subset\mathbf{C}_{H}^{1,\alpha}(\mathcal{U})$ is strict; see, e.g., \cite[Remark 5.9]{FSSC01}.
\end{remark}

Finally, we give an intrinsic notion of differentiability in Carnot groups, which was first introduced by Pansu in \cite{Pan89}.

\begin{definition}
Let $\mathbb{G}_{1}, \mathbb{G}_{2}$ be Carnot groups and denote by $\delta_{\lambda}^{1}, \delta_{\lambda}^{2}$ the respective dilations. We say that a mapping
\[
L: \mathbb{G}_{1} \to \mathbb{G}_{2}
\]
is {\bfseries $H$-linear} if $L$ is a group homomorphism that satisfies
\[
L\left(\delta_{\lambda}^{1} (g)\right)=\delta_{\lambda}^{2} (Lg), \qquad \text{ for all }g \in \mathbb{G}_{1} \text{ and }\lambda>0.
\]
\end{definition}

\begin{definition}[Pansu differentiability]
Let $\left(\mathbb{G}_{1}, d_{1}\right)$ and $\left(\mathbb{G}_{2}, d_{2}\right)$ be Carnot groups and $\Omega \subset \mathbb{G}_{1}$. We say that a function $f: \Omega \to \mathbb{G}_{2}$ is {\bfseries Pansu differentiable} in $g \in \Omega$ if there is a $H$-linear map $L_{g}: \mathbb{G}_{1} \to \mathbb{G}_{2}$ such that
\[
\frac{d_{2}\left(f(g)^{-1} \cdot f\left(h \right), L_{g}\left(g^{-1} \cdot h \right)\right)}{d_{1}\left(g, h\right)} \to 0 \qquad \text { as } d_{1}\left(g, h \right) \to 0, \ h \in \Omega.
\]
The $H$-linear map $L_{g}$ is usually denoted by $d_{H} f_{g}$ and is referred to as the Pansu differential of $f$ at the point $g \in \mathbb G_1$.
\end{definition}

\subsection{$C^{1,\alpha}$-rectifiability in low codimension} \label{sec:rectif}

In \cite[Proposition 2.20]{MSSC10}, it was proved that the metric dimension in $\bH^n$ is given by
\[
k_m = k + 1 \qquad \text{if } n +1 \le k \le 2n.
\]
Thus, the notion of {\itshape rectifiability via Lipschitz maps} is only interesting in low dimension, i.e. $k \le n$, since, given a Lipschitz function $f : A \subset \R^k \to \bH^n$, we have
\[
\H^{k_m} \left(f(A)\right) = 0
\]
whenever dimension $k$ and metric dimension $k_m$ are not equal. Therefore, we need to find a more suitable notion of rectifiability.  The idea, looking at the Euclidean setting, is to introduce a notion of {\itshape regular surfaces} as follows:

\begin{definition}\label{Def:RegSurf}
Let $n < k \le 2n$. A set $S \subset \bH^n$ is a $k$-dimensional $(\mathbf{C}_H^{1,\alpha},\bH)$-regular surface, if for any $p \in S$ there are $\Omega \subset \bH^n$ open and $f \in [\mathbf{C}_H^{1,\alpha}(\Omega)]^{2n+1-k}$ such that
\begin{enumerate}[label=(\alph*), itemsep=1ex]
\item $d_H f_q$ is surjective for every $q \in \Omega$;
\item $S \cap \Omega = \{ q \in \Omega \: : \: f(q) = 0 \}$.
\end{enumerate}
\end{definition}

The operator $d_H f$ is the {\itshape Pansu differential} and it is represented by the horizontal gradient $\nabla_H f$ introduced above. This definition (for $\mathbf{C}_H^1$ functions) was already given in \cite{MSSC10}, so we refer the reader to that paper for more details.

\begin{definition}\label{Def:TanSub}
Let $n < k\le 2n$ and let $S$ be a $k$-dimensional $(\mathbf{C}_H^{1,\alpha},\bH)$-regular surface. The {\bfseries tangent group} to $S$ at $p \in S$, denoted as $T_{\bH} S(p)$, is given by
\[
T_{\bH} S(p) := \left\{q \in \bH^n \ : \ d_H f_{p}(q) = 0 \right\}.
\]
\end{definition}

The following characterization of $(\mathbf{C}_H^{1,\alpha},\bH)$-regular surfaces is an immediate consequence of the definition above:

\begin{proposition} \label{prpps23}
A set $S$ is a $k$-dimensional $(\mathbf{C}_H^{1,\alpha},\bH)$-regular surface if and only if $S$ is locally the intersection of $(2n+1-k)$ $1$-codimensional $(\mathbf{C}_H^{1,\alpha},\bH)$-regular surfaces with linearly independent normal vectors.
\end{proposition}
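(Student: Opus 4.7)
The plan is to unwind Definition \ref{Def:RegSurf} by passing between the vector-valued formulation $f = (f_1, \ldots, f_{2n+1-k}) \in [\mathbf{C}_H^{1,\alpha}(\mathcal{U})]^{2n+1-k}$ and its components $f_i$, exploiting the fact that surjectivity of the Pansu differential is, at each point, a statement of linear independence of the horizontal gradients.

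For the forward direction, suppose $S$ is a $k$-dimensional $(\mathbf{C}_H^{1,\alpha},\bH)$-regular surface with defining map $f$ on some open neighborhood $\mathcal{U}$ of a given $p \in S$. I set $S_i := \{q \in \mathcal{U} : f_i(q)=0\}$ for each $i = 1,\dots,2n+1-k$. The key observation is that $d_H f_q$ acts on the horizontal layer $\ch_1$ via the $(2n+1-k)\times 2n$ matrix whose rows are the horizontal gradients $\nabla_H f_i(q)$; hence surjectivity of $d_H f_q$ onto $\R^{2n+1-k}$ is equivalent to linear independence of $\nabla_H f_1(q),\dots,\nabla_H f_{2n+1-k}(q)$. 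In particular each $\nabla_H f_i(q) \ne 0$, so each $S_i$ is itself a $1$-codimensional $(\mathbf{C}_H^{1,\alpha},\bH)$-regular surface, and by construction $S \cap \mathcal{U} = \bigcap_{i=1}^{2n+1-k} S_i$ with linearly independent horizontal normals $\nabla_H f_i$.

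For the reverse direction, suppose that in a neighborhood of $p \in S$ one has $S = \bigcap_{i=1}^{2n+1-k} S_i$ with each $S_i = \{f_i = 0\}$ a $1$-codimensional $(\mathbf{C}_H^{1,\alpha},\bH)$-regular surface on a neighborhood $\mathcal{U}_i$ of $p$ and the normals $\nabla_H f_i(p)$ linearly independent. I intersect the domains to get $\mathcal{U}' := \bigcap_i \mathcal{U}_i$ and use the continuity of each $\nabla_H f_i$ (part of the $\mathbf{C}_H^{1,\alpha}$ regularity) to shrink to an open $\mathcal{U}\subseteq\mathcal{U}'$ on which the full-rank condition persists pointwise. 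Setting $f := (f_1,\dots,f_{2n+1-k}) \in [\mathbf{C}_H^{1,\alpha}(\mathcal{U})]^{2n+1-k}$ then yields a map with $d_H f_q$ surjective at every $q\in\mathcal{U}$ and with $S\cap\mathcal{U} = \{f=0\}$, which is exactly Definition \ref{Def:RegSurf}.

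The proof amounts to definitions plus elementary linear algebra, so no serious obstacle is expected. The only point requiring care is the shrinking step in the reverse direction, to secure a common open set on which all $f_i$ are simultaneously defined and their horizontal gradients remain linearly independent; this is immediate from the continuity of $\nabla_H f_i$.
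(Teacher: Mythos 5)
Your proof is correct and is exactly the argument the paper has in mind: the paper offers no written proof, stating only that the proposition ``is an immediate consequence of the definition,'' and your unwinding of Definition \ref{Def:RegSurf} (splitting $f$ into components for the forward direction, reassembling the $f_i$ into a vector-valued map and shrinking the neighborhood to preserve linear independence for the converse) is precisely that immediate consequence spelled out. The identification of surjectivity of $d_H f_q$ with linear independence of the $\nabla_H f_i(q)$ is justified by the paper's remark that the Pansu differential of a real-valued $\mathbf{C}_H^{1,\alpha}$ function is represented by its horizontal gradient, so no gap remains.
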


\begin{remark}
Let $\Omega\subset\bH^n$ be an open set. The Taylor's expansion of a function $f\in \mathbf{C}_{H}^{1,\alpha}(\Omega)$ based at the point $x_0\in\Omega$ is given by (see \cite[Theorem~1.42]{FS82})
\begin{equation}\label{TaylorExp}
   f(x)=f(x_0)+d_{H}f_{x_0}(x_0^{-1}x) + \mathcal{O}\left(d(x_0,x)^{1+\alpha}\right).
\end{equation}
\end{remark}

To conclude this preliminary section, we now give the formal definition of $C^{1,\alpha}$-rectifiability for a subset of the Heisenberg group $\bH^n$.

\begin{definition} \label{def:rec}
A measurable set $E \subset \bH^n$ is {} $C^{1,\alpha}$-rectifiable if there are $k$-dimensional $(\mathbf{C}_H^{1,\alpha},\bH)$-regular surfaces $S_i$, with $i \in \N$, such that
\[
\cH^{k_m} \left(E\sm\bigcup_{i\in\N}S_i\right)=0,
\]
where $k_m=k$ if {} $1\le k \le n$ and $k_m=k+1$ if {} $n+1\le k \le 2n$.
\end{definition}

\subsection{Whitney's extension theorem} The following Whitney-type extension theorem was proved in \cite[Theorem 4]{vodpup} for general Carnot groups, but for simplicity here we only state it for the Heisenberg group $\bH^n$.

\begin{theorem}[$C^{1,\alpha}$-extension]\label{Lemma:Whitney}
Let $F$ be a closed subset of $\bH^n$, $\alpha\in(0,1]$ and $f:F \to \R$, $g:F \to \mathrm{H}\bH^n$ functions satisfying the following property: there exists a positive constant $M$ such that
\begin{enumerate}[label=(\roman*),itemsep=.2em]
    \item $|f(x)| \le M$ and $|g(x)| \le M$ on every compact subset of $F$;
    \item $|f(x)-f(y)-\langle g(x),\pi(y^{-1}x)\rangle|\le Md(x,y)^{1+\alpha}$ for every $x,y\in F$;
    \item $|g(x)-g(y)|\le M d(x,y)^\alpha$ for every $x,y\in F$;
\end{enumerate}
where $\langle\cdot,\cdot\rangle$ denotes the inner product in $\mathrm{H}\bH^n$. Then there exists $\tilde{f}:\bH^n\to \R$, extending $f$ to $\bH^n$, such that $\tilde{f}\in \mathbf{C}^{1,\alpha}_H(\bH^n)$ and
\[
g(x)=\nabla_H\tilde{f}(x) \qquad \text{for every } x \in F.
\]
\end{theorem}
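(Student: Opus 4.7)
The plan is to mimic the classical Euclidean Whitney extension, adapted to the left-invariant geometry of $\G$. At each $x_0 \in F$ the intrinsic first-order ``polynomial candidate'' is the affine function
\[
P_{x_0}(x) := f(x_0) + \langle g(x_0), \pi(x_0^{-1} x) \rangle,
\]
whose horizontal gradient is constantly $g(x_0)$. Conditions (ii)--(iii) say precisely that the family $\{P_{x_0}\}_{x_0 \in F}$ is $C^{1,\alpha}$-compatible, and the task is to glue these candidates into a single $\mathbf{C}^{1,\alpha}_\G$ function on $\G$.

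First I would perform a Whitney-type decomposition of the open set $\Omega := \G \setminus F$ with respect to the homogeneous metric $d$: a countable collection of balls $B_i = B(c_i, r_i)$ with $r_i \sim \dist(c_i, F)$ and bounded overlap $\sum_i \1_{10 B_i} \le N$, available in every doubling metric space. To each $B_i$ I associate a projection $x_i \in F$ satisfying $d(c_i, x_i) \le 2 \dist(c_i,F)$, set $P_i := P_{x_i}$, and build a smooth partition of unity $\{\phi_i\}$ subordinate to $\{2 B_i\}$ with the standard bound $|X_\ell \phi_i|, |Y_\ell \phi_i| \le C/r_i$. The extension is then defined by
\[
\tilde f(x) := f(x) \text{ for } x \in F, \qquad \tilde f(x) := \sum_i \phi_i(x) P_i(x) \text{ for } x \in \Omega.
\]

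The heart of the verification rests on two comparison estimates valid whenever $x \in \overline{2 B_i} \cap \overline{2 B_j}$:
\[
|g(x_i) - g(x_j)| \le C M \dist(x,F)^\alpha, \qquad |P_i(x) - P_j(x)| \le C M \dist(x,F)^{1+\alpha}.
\]
The first is immediate from (iii) since $d(x_i, x_j) \lesssim \dist(x,F)$ on the overlap; the second follows by expanding $P_i(x) - P_j(x)$, using that $\pi$ is a group homomorphism onto the first layer (so $\pi(x_j^{-1} x) = \pi(x_j^{-1} x_i) + \pi(x_i^{-1} x)$ since the Baker--Campbell--Hausdorff corrections lie in higher strata), inserting (ii) for $f(x_i) - f(x_j)$, and absorbing the remainder by (iii). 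Differentiating $\tilde f$ on $\Omega$ and inserting a reference index $j$ through the identity $\sum_i \nabla_H \phi_i = 0$ then yields
\[
\nabla_H \tilde f(x) = g(x_j) + \sum_i \phi_i(x)\bigl(g(x_i) - g(x_j)\bigr) + \sum_i (\nabla_H \phi_i)(x)\bigl(P_i(x) - P_j(x)\bigr),
\]
which combined with $|\nabla_H \phi_i| \lesssim 1/\dist(x,F)$ gives $|\nabla_H \tilde f(x) - g(x_j)| \le C M \dist(x,F)^\alpha$ on $\Omega$.

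The remaining work is a case-by-case check: (a) continuity of $\tilde f$ across $F$ (from (ii) and $|P_i(x) - f(x_0)|$ estimates); (b) identification of $\nabla_H \tilde f(x_0) = g(x_0)$ for $x_0 \in F$ (from the display above); and (c) the $\alpha$-Hölder estimate for $\nabla_H \tilde f$ on all of $\G$, split into the three subcases $x,y \in F$ (direct from (iii)), $x,y \in \Omega$ (applying the display at both points), and the mixed case. I expect the mixed case $x \in F$, $y \in \Omega$ to be the main obstacle: one must compare $g(x)$ with $\nabla_H \tilde f(y) \approx g(x_{j})$ for a Whitney-associated $x_j$, and control the non-abelian BCH corrections hidden in $\pi(x^{-1} y)$ while showing $d(x, x_j) \lesssim d(x,y)$. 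Everything else reduces to bookkeeping on the Whitney covering together with Leibniz-rule applications of the horizontal vector fields $X_\ell, Y_\ell$.
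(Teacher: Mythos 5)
The paper does not prove this statement at all: it is imported verbatim from the literature (it is \cite[Theorem 4]{vodpup}), so there is no internal proof to compare against. Your outline is the standard Whitney-decomposition argument adapted to the homogeneous metric, which is also the strategy of the cited source, and the main computations you sketch are correct: in particular the identity $\pi(x_j^{-1}x)=\pi(x_j^{-1}x_i)+\pi(x_i^{-1}x)$ does hold because $\pi$ is a homomorphism onto the (abelian) first layer, $\nabla_H P_{x_0}\equiv g(x_0)$ since $P_{x_0}$ depends only on the horizontal coordinates, and the two overlap estimates follow from (ii)--(iii) exactly as you indicate (for the $P_i-P_j$ bound one also needs $|\langle g(x_i)-g(x_j),\pi(x_j^{-1}x_i)\rangle|\le M d(x_i,x_j)^{1+\alpha}$, using $|\pi(p)|\le\|p\|$, to pass between the Taylor remainders based at $x_i$ and at $x_j$). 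Two points in your plan deserve correction of emphasis. First, the mixed H\"older case you single out as the main obstacle is actually routine: for $x\in F$, $y\in\Omega$ one has $d(x,x_j)\le d(x,y)+d(y,x_j)\lesssim d(x,y)$ and no BCH correction enters, since the comparison is between $g(x)$ and $g(x_j)$ via (iii) plus your gradient display. Second, the genuinely delicate step, which your item (b) glosses over, is showing that $\nabla_H\tilde f$ \emph{exists} at points of $F$ and equals $g$ there: the displayed formula for $\nabla_H\tilde f$ is only valid on $\Omega$, so at $x_0\in F$ you must instead prove the full first-order Taylor estimate $|\tilde f(y)-f(x_0)-\langle g(x_0),\pi(x_0^{-1}y)\rangle|\le CMd(x_0,y)^{1+\alpha}$ for all nearby $y$ (in $F$ via (ii), in $\Omega$ via the partition of unity and the $|P_i-P_{x_0}|$ bounds), and then invoke a converse-Taylor/stratified mean value argument to conclude $\tilde f\in\mathbf{C}^{1,\alpha}_\G$. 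With that step filled in, the proposal is a complete and faithful reconstruction of the quoted extension theorem.
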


\subsection{The intrinsic Grassmannian}\label{sec:intgras}

A subgroup $S \subset \bH^n$ is a {\itshape homogeneous subgroup} if, for every $r > 0$, we have
\[
\delta_r(S) \subseteq S.
\]

\begin{definition}\label{def:horizontal}
Let $S \subset \bH^n$ be a homogeneous subgroup. We say that \mbox{}
\begin{itemize}
	\item $S$ is {\itshape horizontal} if it is contained in $\exp(\ch_1)$; \smallskip
	\item $S$ is {\itshape vertical} if it contains the center $\mathbb T$ of $\bH^n$.
\end{itemize}
\end{definition}

\noindent We also notice that horizontal subgroups are commutative while vertical subgroups are non-commutative and {\em normal} in $\bH^n$.

\begin{definition}
We say that two homogeneous subgroups $S$ and $T$ of $\bH^n$ are {\itshape complementary subgroups} in $\bH^n$ if the following holds: 
\[
S \cap T = \{0\} \qquad \text{and} \qquad \bH^n = T \cdot S.
\]
In addition, if $T$ is normal, we say that $\bH^n$ is the {\itshape semidirect} product of $S$ and $T$ and we denote it by $\bH^n = T \rtimes S$.
\end{definition}

If $\bH^n$ is the semidirect product of homogeneous subgroups $S$ and $T$, then we can define unique projections $\pi_S : \bH^n \to S$ and $\pi_T : \bH^n \to T$ such that
\[
\mathrm{id}_{\bH^n} = \pi_T \cdot \pi_S.
\]
Moreover, if $T$ is normal in $\bH^n$, we have the following identities: \mbox{}
\begin{itemize}[label=\tiny$\bullet$ ]
	\item $\pi_S(p^{-1}) = \pi_S^{-1}(p)$ and $\pi_T(p^{-1}) = \pi_S^{-1}(p) \cdot \pi_T^{-1}(p) \cdot \pi_S(p)$; \smallskip
	\item $\pi_S(\delta_r p) = \delta_r \pi_S(p)$ and $\pi_T(\delta_r p) = \delta_r \pi_T(p)$; \smallskip
	\item $\pi_S(p \cdot q) = \pi_S(p) \cdot \pi_S(q)$ and $\pi_T(p \cdot q)=\pi_T(p) \cdot \pi_S(p)\cdot  \pi_T(q) \cdot \pi_S^{-1}(p)$.
\end{itemize}

\begin{proposition} \label{prop:idsl}
If $\bH^n = T \rtimes S$, then the projections $\pi_S$ and $\pi_T$ are continuous, $\pi_S$ is $H$-linear, and there is a constant $c(S,T) := c > 0$ such that
\[
\begin{aligned}
& c \|\pi_S(p)\| \leq d(p, T) \leq \|\pi_S(p)\|,
\\ & c \|\pi_S^{-1}(p) \cdot \pi_{T}(p) \cdot \pi_S(p)\| \leq d(p, S) \leq \|\pi_S^{-1}(p) \cdot \pi_{T}(p) \cdot \pi_S(p)\|, \end{aligned}
\]
for all $p \in \bH^n$, where we use the notation
\[
d(p,S) := \inf_{s \in S} d(p,s) = \inf_{s \in S} \| p^{-1}s\|.
\] 
\end{proposition}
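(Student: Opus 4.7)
\textbf{Proof proposal for Proposition \ref{prop:idsl}.}

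The plan is to first set up the algebraic decomposition and then derive the distance estimates via a homogeneity–compactness argument, following the blueprint already used by Franchi–Serapioni–Serra Cassano for splittings of Carnot groups. First I would check that the map $\Phi : T \times S \to \bH^n$, $\Phi(t,s) := t \cdot s$, is a homeomorphism: it is continuous since the group law is smooth in exponential coordinates, it is bijective by the assumption $\bH^n = T \rtimes S$, and its inverse $p \mapsto (\pi_T(p), \pi_S(p))$ is also continuous — either because $\Phi$ is a smooth bijection between manifolds of the same dimension whose differential is invertible (invoke the inverse function theorem), or by noting that $\Phi$ is a proper continuous bijection between locally compact Hausdorff spaces. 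This already gives continuity of $\pi_S$ and $\pi_T$. The fact that $\pi_S$ is an $h$-homomorphism follows directly from the algebraic equalities listed just before the proposition: $\pi_S(p\cdot q)=\pi_S(p)\cdot\pi_S(q)$ and $\pi_S(\delta_\lambda p)=\delta_\lambda\pi_S(p)$.

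For the distance estimates, the upper bounds are immediate. Since $\pi_T(p) \in T$ and $p = \pi_T(p)\cdot\pi_S(p)$, we have $d(p,T) \le \|\pi_T(p)^{-1}p\| = \|\pi_S(p)\|$. Similarly, since $\pi_S(p) \in S$, we get $d(p,S) \le \|\pi_S(p)^{-1}p\| = \|\pi_S(p)^{-1}\cdot\pi_T(p)\cdot\pi_S(p)\|$, using once again $p = \pi_T(p)\cdot\pi_S(p)$.

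The lower bounds are the main point. For the first one, set $c_1 := \sup\{\|\pi_S(q)\| : \|q\|=1\}$, which is finite by continuity of $\pi_S$ and compactness of the unit sphere. By dilation–homogeneity of $\pi_S$ we obtain $\|\pi_S(q)\| \le c_1\|q\|$ for every $q \in \bH^n$. For $t \in T$, since $\pi_S(t)=e$ and $\pi_S$ is a homomorphism, $\pi_S(t^{-1}p) = \pi_S(p)$; hence $\|\pi_S(p)\| = \|\pi_S(t^{-1}p)\| \le c_1 \|t^{-1}p\|$, and taking the infimum over $t \in T$ yields $c_1^{-1}\|\pi_S(p)\| \le d(p,T)$.

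For the second lower bound, the key observation is that the quantity $Q(p) := \pi_S^{-1}(p)\cdot\pi_T(p)\cdot\pi_S(p)$ is left-invariant under $S$: using the algebraic identities $\pi_S(s^{-1}p)=s^{-1}\pi_S(p)$ and $\pi_T(s^{-1}p)=s^{-1}\pi_T(p)s$ for $s \in S$, a short substitution shows $Q(s^{-1}p)=Q(p)$. Moreover $Q$ is $1$-homogeneous: $Q(\delta_\lambda p) = \delta_\lambda Q(p)$. Setting $C := \sup\{\|Q(q)\| : \|q\|=1\}$ (finite by continuity and compactness), homogeneity gives $\|Q(p)\| \le C\|p\|$ for all $p$, and then for every $s \in S$:
\[
\|Q(p)\| = \|Q(s^{-1}p)\| \le C\|s^{-1}p\|.
\]
Taking the infimum over $s \in S$ yields $C^{-1}\|Q(p)\| \le d(p,S)$. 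Choosing $c := \min(c_1^{-1},C^{-1})$ concludes the proof.

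The main obstacle I anticipate is the lower bound for $d(p,S)$: the non-normality of $S$ forces us to measure distance through the conjugated quantity $Q(p)$ rather than through $\pi_T(p)$ itself, and the left $S$-invariance of $Q$ (rather than of $\pi_T$) is what makes the compactness/homogeneity trick go through. All other steps are essentially bookkeeping with the algebraic identities listed above the statement.
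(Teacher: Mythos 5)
Your argument is correct: the upper bounds are the trivial ones obtained by testing against $\pi_T(p)$ and $\pi_S(p)$, and the lower bounds follow from the homogeneity and continuity of $\pi_S$ and of $Q(p)=\pi_S^{-1}(p)\cdot\pi_T(p)\cdot\pi_S(p)$ together with their invariance under left translation by $T$ and $S$ respectively (your computation $Q(s^{-1}p)=Q(p)$ checks out using the listed identities, and both suprema over the compact unit sphere are finite and at least $1$). The paper itself gives no proof here, deferring to \cite{MSSC10} and \cite{idumama1}, and your compactness--homogeneity argument is essentially the standard one carried out in those references.
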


This result was proved in \cite{MSSC10} for the Heisenberg group and generalized in \cite{idumama1} to all homogeneous groups.

\begin{remark}
In \cite{idumama1}, with Magnani, we also showed that the constant $c$ does not depend on $S$ and $T$: it suffices to consider , for $1 \le k \le n$, a $k$-homogeneous subgroup $S$ and write 
\[
\bH^n = S^\perp \rtimes S,
\]
where $S^\perp$ is the vertical subgroup defined as follows: If $S = \langle f_1,\ldots,f_k\rangle$, we take
\[
S^\perp := \langle f_1,\ldots,f_k\rangle^{\perp_{H^1}} \oplus \langle e_{2n+1} \rangle,
\]
where $\perp_{H^1}$ denotes the orthogonal to the horizontal layer of $\bH^n$ with respect to the fixed scalar product. In this case, we denote by $c_\bH$ the universal constant.
\end{remark}

We are now ready to provide the notion of {\em intrinsic Grassmannian}, which was first introduced in \cite{MSSC10} for the Heisenberg groups.

\begin{definition}\label{def.grass}
Let $k \in \{0,\ldots,2n+1\}$. A $k$-homogeneous subgroup $S$ belongs to the $k$-Grassmannian, denoted by $\cG(\bH^n,k)$, if there is a $(2n+1-k)$-subgroup $T$ such that $\bH^n = T \cdot S$. Moreover, the corresponding union
\[
\cG(\bH^n) := \bigcup_{k = 0}^{2n+1} \cG(\bH^n,k)
\]
is often referred in the literature as the {\em intrinsic Grassmannian} of $\bH^n$.
\end{definition}

\begin{proposition}\label{prop.str}
The trivial subgroups $\{e\}$ and $\bH^n$ are the unique elements of $\cG(\bH^n,0)$ and $\cG(\bH^n,2n+1)$ respectively. Moreover, the following holds: \mbox{}
\begin{enumerate}[label=(\roman*)]
\item for $1 \le k \le n$, the $k$-Grassmannian $\cG(\bH^n,k)$ coincides with the set of all horizontal $k$-homogeneous subgroups; \smallskip
\item for $n < k \le 2n$, the $k$-Grassmannian $\cG(\bH^n,k)$ coincides with the set of all vertical $k$-homogeneous subgroups.
\end{enumerate}
Consequently, any vertical subgroup $T$ with linear dimension $1\le k \le n$ is not an element of the intrinsic Grassmannian.
\end{proposition}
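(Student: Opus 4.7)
The proof rests on the dichotomy recalled earlier: every homogeneous subgroup of $\bH^n$ is either \emph{horizontal}, hence contained in $\exp(\mathfrak{h}_1)$ and of linear dimension at most $n$, or \emph{vertical}, hence containing the center $\mathbb{T}$. From this the crucial observation is that two vertical subgroups can never be complementary, because their intersection necessarily contains $\mathbb{T}\neq\{e\}$. The boundary cases $k=0$ and $k=2n+1$ are immediate, since $\{e\}$ and $\bH^n$ are the only subgroups of the respective linear dimensions.

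With the key observation in hand, each of the remaining statements becomes a dimension count. For (i), given a horizontal $k$-subgroup $S$ with $1\le k\le n$, I produce the vertical complement $T:=S^{\perp_{H^1}}\oplus\langle e_{2n+1}\rangle$ of linear dimension $2n+1-k$, for which $T\cap S=\{e\}$ is immediate and $\bH^n=T\cdot S$ follows from the argument below. Conversely, if $S\in\cG(\bH^n,k)$ with $k\le n$, its complement has linear dimension $2n+1-k\ge n+1>n$, too large to be horizontal and hence vertical; by the key observation $S$ cannot itself be vertical and is therefore horizontal. Statement (ii) is handled symmetrically: a $k$-subgroup with $k\ge n+1$ necessarily exceeds the horizontal ceiling and is vertical, while given a vertical $S$ of linear dimension $k$, decomposing its Lie algebra as $\mathfrak{s}=\mathfrak{w}\oplus\mathfrak{h}_2$ with $\mathfrak{w}\subset\mathfrak{h}_1$ of dimension $k-1$ and choosing any vector-space complement $\mathfrak{t}$ of $\mathfrak{w}$ inside $\mathfrak{h}_1$ yields the desired horizontal complement $\exp(\mathfrak{t})$. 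Finally, if a vertical subgroup had linear dimension $\ell\in\{1,\dots,n\}$, its complement would have linear dimension $2n+1-\ell\ge n+1$ and hence be vertical, contradicting the key observation.

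The only nontrivial step is to upgrade the infinitesimal information---$\mathrm{Lie}(T)\cap\mathrm{Lie}(S)=\{0\}$ together with $\dim\mathrm{Lie}(T)+\dim\mathrm{Lie}(S)=2n+1$---to the global identity $\bH^n=T\cdot S$. The plan is to exploit that the vertical factor is normal in $\bH^n$: then $T\cdot S$ is already a subgroup (conjugation of the vertical factor by the horizontal one stays inside itself), the multiplication map $T\times S\to T\cdot S$ is a bijection because $T\cap S=\{e\}$, and its differential at the identity is a linear isomorphism of Lie algebras, so $T\cdot S$ is a closed connected subgroup of $\bH^n$ of maximal dimension, forcing $T\cdot S=\bH^n$. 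I expect this verification to be the main technical point; the remainder of the argument is simply bookkeeping around the horizontal/vertical dichotomy.
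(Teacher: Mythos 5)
The paper itself offers no proof of this proposition --- it simply cites \cite{MSSC10}*{Proposition 2.17} --- so your attempt has to be judged on its own merits. Most of your bookkeeping is correct and is the natural route: the horizontal/vertical dichotomy, the observation that two vertical subgroups always share the center $\mathbb{T}$ and hence can never be complementary (this single fact does drive the converse inclusions in (i), (ii) and the ``furthermore'' clause), the explicit vertical complement $S^{\perp_{H^1}}\oplus\langle e_{2n+1}\rangle$ in (i), and the scheme for upgrading the Lie-algebra splitting to $\bH^n=T\cdot S$ using normality of the vertical factor. That last step is standard for simply connected nilpotent groups and is fine as outlined.

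There is, however, a genuine gap at the one point where something actually has to be constructed. In (ii) you assert that ``choosing any vector-space complement $\mathfrak{t}$ of $\mathfrak{w}$ inside $\mathfrak{h}_1$ yields the desired horizontal complement $\exp(\mathfrak{t})$.'' For $n\ge 2$ this is false: $\exp(\mathfrak{t})$ is a subgroup only when $\mathfrak{t}$ is an abelian subalgebra, i.e.\ isotropic for the symplectic form on $\mathfrak{h}_1$ defined by $[X,Y]$, and a generic linear complement is not isotropic. Concretely, in $\bH^2$ take the vertical $3$-subgroup with horizontal part $\mathfrak{w}=\mathrm{span}\{X_1,X_2\}$; the complement $\mathfrak{t}=\mathrm{span}\{Y_1,\,Y_2+X_1\}$ satisfies $[Y_1,Y_2+X_1]=-T\neq 0$, so $\exp(\mathfrak{t})$ is not closed under the group law and cannot serve as the subgroup $T$ required by the definition of $\cG(\bH^n,k)$. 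What is actually needed is that every subspace of $\mathfrak{h}_1$ of dimension at least $n$ admits an \emph{isotropic} complement; this is true, but it is exactly the content of \cite[Lemma 3.26]{FSSC6} (the same lemma the paper invokes in Lemma \ref{lemma.2.32}) and requires a short symplectic-linear-algebra argument rather than a dimension count. The same isotropy consideration underlies your unproved assertion that horizontal subgroups have linear dimension at most $n$ (it follows from their commutativity, which the paper does recall). Once the isotropic complement is supplied, the rest of your argument goes through.
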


This result was proved in \cite[Proposition 2.17]{MSSC10}. Notice that, if $\G$ is a homogeneous group, this characterization is no longer true since the identity
\[
\cG(\G,k) = \left\{ \text{vertical $k$-homogeneous subgroups} \right\}
\]
does not hold for every $k$, unless we put additional assumptions on $\G$ or limit the possible values that $k$ can take (see \Cref{sec:carnot}).

\begin{remark}
If $S$ is a $(\mathbf{C}_H^{1,\alpha},\bH)$-regular surface, then $T_\bH S(p) \in \cG(\bH^n)$.
\end{remark}

\begin{remark} 
The intrinsic Grassmannian $\cG(\bH^n)$ is a subset of the Euclidean one, so it can be endowed with the subspace topology. Moreover, it is easy to verify that $\cG(\bH^n,k)$ is a compact metric space with respect to the distance
\[
\rho(S_1, S_2):= \max_{\|x \|=1} d\left( \pi_{S_1}(x),\pi_{S_2}(x)\right).
\]
\end{remark}

\section{Elementary  Geometric lemmas} \label{sec:intlemma}

The main objects used in this paper are $\alpha$-paraboloids and cylinders, so we now recall the correct definitions in our framework.

\begin{definition}\label{def.objects}
Fix $\alpha \in (0,1]$, $\lambda,\eta >0$ and $r>0$. The {\itshape $\alpha$-paraboloid} centered at $x\in \bH^n$ with base $S \in \cG(\bH^n)$ and parameter $\lambda$ is defined as
\[
Q_\alpha(x,S,\lambda) := \left\{ y \in \bH^n \ : \ d(x^{-1}y,S) \le \lambda d(x,y)^{1+\alpha} \right\}.
\]
On the other hand, the {\itshape cylinder} with axis $S$ and parameter $\eta$ is given by
\[
\cC(S, \eta):=\left\{y\in \bH^n \ : \ d(y,S)<\eta \right\}.
\]
%Throughout the paper we will mainly consider sets of the form $\cC(S, \lambda r^{1+\alpha})\cap B(x,r)$.
\end{definition}

We are now ready to introduce approximate tangent paraboloids, which essentially are the objects that characterize the $C^{1,\alpha}$-rectifiability in \Cref{thm.1.1}.

\begin{definition}\label{Def:AppTanPar}
Let $E \subset \bH^n$ be a $\H^{k_m}$-measurable set and $\alpha \in (0,1]$. A homogeneous subgroup $V_p$, of dimension $k$ and metric dimension $k_m$, is an {\itshape approximate tangent paraboloid} to $E$ at a point $p$ if \mbox{} 
\begin{enumerate}[label=(\roman*)]
	\item $\Theta^{\ast k_m}(E,p)>0$; \smallskip
	\item $\lim_{r \to 0} r^{-k_m} \H^{k_m} \left(E \cap B(p,r) \sm Q_\alpha(p,V_p,\lambda)\right) = 0$ for every $\lambda > 0$.
\end{enumerate}
We write $\mathrm{apPar}_{\bH}^{k_m}(E,p)$ for the set of all approximate tangent paraboloids to $E$ at $p$ and, when it is unique, we simply denote it by $V_p$.
\end{definition}

The following result, which is a consequence of the definitions, gives the analogous relationship between estimates on cylinders and paraboloids as in \cite[Lemma 2.3]{ninidu1} in the Euclidean setting. The proof in this setting is essentially the same thus we omit it.

\begin{lemma}\label{Lem:EquivConePar}
Let $V \in \cG(\bH^n,k)$ and fix $r_0 > 0$. Suppose that
\[
\H^{k_m} \left(E \cap B(x,r) \setminus \cC(V,\lambda r^{1+\alpha})\right) \le \epsilon r^{k_m}
\]
for every $r < r_0$. Then we have
\[
\H^{k_m} \left(E \cap B(x,r) \setminus Q_\alpha(x,V,\lambda')\right) \le \frac{\epsilon}{1-2^{-{k_m}}} r^{k_m}
\]
for every $r<r_0$, where $\lambda' := 4^{1+\alpha} \lambda$.
\end{lemma}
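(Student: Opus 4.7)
The plan is to dyadically decompose the ball $B(x,r)$ into annuli centered at $x$ and, on each scale, transfer the hypothesis about the cylinder to a statement about the paraboloid by comparing the defining inequalities. This is the direct analogue of the proof of \cite[Lemma~2.3]{ninidu1} in the Euclidean setting, adapted to the Heisenberg metric.

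Concretely, set $r_j := r / 2^{j-1}$ and $A_j := B(x, r_j) \setminus B(x, r_{j+1})$ for $j \geq 1$, so that
\[
B(x,r) \setminus \{x\} = \bigsqcup_{j \geq 1} A_j,
\]
and $\H^k(\{x\}) = 0$ reduces the estimate on $B(x,r)$ to a summation over the $A_j$. For each $j \geq 1$, I first establish the pointwise inclusion
\[
A_j \setminus Q_\alpha(x, S, \lambda') \;\subseteq\; B(x, r_j) \setminus \cC(S, \lambda r_j^{1+\alpha}).
\]
Indeed, if $y \in A_j$ lies outside $Q_\alpha(x, S, \lambda')$, then by definition
\[
d(x^{-1}y, S) \;>\; \lambda' \, d(x, y)^{1+\alpha} \;\geq\; 4^{1+\alpha} \lambda \, (r/2^j)^{1+\alpha} \;=\; 2^{1+\alpha} \lambda \, r_j^{1+\alpha} \;>\; \lambda \, r_j^{1+\alpha},
\]
where I used that $y \in A_j$ forces $d(x,y) \geq r/2^j = r_j/2$. (Here the cylinder $\cC(S, \eta)$ is understood with the same convention as the paraboloid, namely $d(x^{-1}y, S) < \eta$, which is how it is consistently used throughout the paper.)

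Applying the standing hypothesis at radius $r_j < r_0$ then yields
\[
\H^k\!\left(E \cap A_j \setminus Q_\alpha(x, S, \lambda')\right) \;\leq\; \H^k\!\left(E \cap B(x, r_j) \setminus \cC(S, \lambda r_j^{1+\alpha})\right) \;\leq\; \epsilon \, r_j^k \;=\; \epsilon \, r^k \, 2^{-(j-1)k}.
\]
Summing the geometric series over $j \geq 1$ gives
\[
\H^k\!\left(E \cap B(x,r) \setminus Q_\alpha(x, S, \lambda')\right) \;\leq\; \epsilon \, r^k \sum_{j=1}^{\infty} 2^{-(j-1)k} \;=\; \frac{\epsilon}{1 - 2^{-k}} \, r^k,
\]
which is the desired conclusion.

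The only genuinely non-routine step is the pointwise comparison on each annulus, and in particular verifying that the constant $\lambda' = 4^{1+\alpha}\lambda$ is calibrated so that the factor $(d(x,y)/r_j)^{1+\alpha} \geq 2^{-(1+\alpha)}$ on $A_j$ is compensated (with room to spare). Everything else is a standard dyadic summation, and the metric structure of $\bH^n$ enters only through left-invariance of $d$ in the identification $d(x^{-1}y, S) = d(y, x \cdot S)$, so no specifically horizontal estimate is needed beyond what the definitions of $Q_\alpha$ and $\cC$ already encode.
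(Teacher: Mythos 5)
Your proof is correct and follows essentially the same route as the paper's: a dyadic annular decomposition of $B(x,r)$, a pointwise inclusion of each annulus minus the paraboloid into the corresponding ball minus the cylinder (which is exactly where the calibration $\lambda'=4^{1+\alpha}\lambda$ is used), and a geometric series summation. Your explicit verification of the annulus inclusion and your remark on the cylinder's centering convention are if anything slightly more careful than the paper's one-line ``straightforward verification.''
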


% \begin{proof}
% First, we notice that for every $r>0$ we have
% \[
% \left(B(x,r)\sm B\left(x,\frac{r}{2}\right)\right)\setminus Q_\alpha(x,V,\lambda')\subset B(x,r)\setminus \cC(V, \lambda r^{1+\alpha}).
% \]
% As a consequence, we obtain the inclusion
% \begin{align*}
%  B(x,r)\setminus Q_\alpha(x,V,\lambda')&= \bigcup_{j \in \N} \left(B(x, 2^{-j}r)\sm B(x,2^{-j-1}r)\right)\sm Q_\alpha(x,V,\lambda')\\
%  &\subset\bigcup_{j \in \N} B(x,2^{-j}r)\sm \cC\left(V, \lambda \left(2^{-j} r\right)^{1+\alpha}\right),
% \end{align*}
% and, using the assumption, we conclude that
%  \[
%  \H^k\left(E\cap B(x,r)\setminus Q_\alpha(x,V,\lambda')\right)\le \eps \sum_{j \in \N} \left(2^{-j}\right)^k r^k=\frac{\eps r ^k}{1-2^{-k}}.\qedhere
%  \]
% \end{proof}

Using the Taylor's expansion \eqref{TaylorExp} and following closely the strategy proposed in \cite[Lemma 2.28]{MSSC10}, we obtain an useful inclusion result:

\begin{lemma}\label{ConvGeomLem}
Fix $n<k\le 2n$. Let $S\subset \bH^n$ be a $k$-dimensional $(\mathbf{C}_H^{1,\alpha},\bH)$-regular surface and $p\in S$. Then there exist $\lambda>0$ and $r_0:=r_0(S,p)>0$ such that
\begin{equation}\label{Eq:ConvGeomLem}
S\cap B(p,r_0)\subset Q_\alpha(p,T_{\bH}S(p),\lambda).
\end{equation}
\end{lemma}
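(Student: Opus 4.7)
The plan is to work locally and reduce the containment to the Taylor expansion (\ref{TaylorExp}). By Definition \ref{Def:RegSurf}, near $x$ there exist an open neighbourhood $\cU\subset\bH^n$ and $f\in[\mathbf{C}_H^{1,\alpha}(\cU)]^{2n+1-k}$ with $f(x)=0$, $d_H f_x$ surjective and $S\cap\cU=\{f=0\}$. Set $V:=T_\bH S(x)=\ker d_H f_x$; since $n+1\le k\le 2n$, $V$ is a vertical $k$-dimensional homogeneous subgroup (hence normal) with metric dimension $k+1$. I would then fix a horizontal complement $N$ of linear dimension $2n+1-k$, which exists because $V$ contains the center $\mathbb{T}$, obtaining the decomposition $\bH^n=V\rtimes N$ with the associated projections $\pi_V,\pi_N$ as in Proposition \ref{prop:idsl}.

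With this local picture set up, for any $y\in S\cap B(x,r_0)\subset\cU$, with $r_0$ small enough, applying the Taylor expansion (\ref{TaylorExp}) componentwise yields
\[
0=f(y)-f(x)=d_H f_x(x^{-1}y)+\mathcal{O}\!\left(d(x,y)^{1+\alpha}\right),
\]
so that $|d_H f_x(x^{-1}y)|\le M\,d(x,y)^{1+\alpha}$ for some $M=M(S,x)>0$ uniformly in $y$ close to $x$.

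The only nontrivial step is then to convert this bound on $|d_H f_x(x^{-1}y)|$ into an estimate for $d(x^{-1}y,V)$. Because $d_H f_x$ is $H$-linear with kernel $V$, it factors through $\bH^n/V\cong N$; since $N$ is horizontal and has the same linear dimension $2n+1-k$ as the target, $d_H f_x|_N$ is an ordinary linear isomorphism of finite-dimensional Euclidean spaces, and hence there exists $C>0$ such that $\|n\|\le C\,|d_H f_x(n)|$ for every $n\in N$. Since $\pi_V(p)\in V=\ker d_H f_x$ we have $d_H f_x(p)=d_H f_x(\pi_N(p))$, and therefore $\|\pi_N(p)\|\le C|d_H f_x(p)|$ for every $p\in\bH^n$. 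Combining this with the upper bound $d(p,V)\le \|\pi_N(p)\|$ from Proposition \ref{prop:idsl} and choosing $p=x^{-1}y$, I would obtain
\[
d(x^{-1}y,V)\le \|\pi_N(x^{-1}y)\|\le C|d_H f_x(x^{-1}y)|\le CM\,d(x,y)^{1+\alpha},
\]
which is exactly the claim that $y\in Q_\alpha\!\left(x,V,\lambda\right)$ with $\lambda:=CM$.

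The step that takes the most care is the comparability $|d_H f_x(p)|\gtrsim d(p,V)$: morally, this is an ``open mapping'' property of the surjective $H$-linear map $d_H f_x$, and it is exactly where the geometry of the Heisenberg group enters, being reduced here to the splitting estimate of Proposition \ref{prop:idsl} applied to $\bH^n=V\rtimes N$. Once this comparison is available, the containment $S\cap B(x,r_0)\subset Q_\alpha(x,T_\bH S(x),\lambda)$ follows from the single Taylor estimate above, and no further ingredient is needed.
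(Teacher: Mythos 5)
Your proposal is correct and follows essentially the same route as the paper: componentwise Taylor expansion \eqref{TaylorExp} to bound $|d_H f_x(x^{-1}y)|$ by $d(x,y)^{1+\alpha}$, then the injectivity of the $H$-linear map $d_H f_x$ on a horizontal complement of its kernel to convert that into a lower bound comparable to $d(x^{-1}y, T_\bH S(x))$. The only difference is cosmetic — you make the last step explicit via the projection estimates of Proposition \ref{prop:idsl} and the factorization $d_H f_x(p)=d_H f_x(\pi_N(p))$, where the paper states the inequality $\|d_H f_x(x^{-1}p)\|\ge c\,d(x^{-1}p,T_\bH S(x))$ more tersely.
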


\begin{proof}
By \Cref{Def:RegSurf} and \Cref{Def:TanSub}, there are $r_0>0$ and $f\in\left[\mathbf{C}_{H}^{1,\alpha}(\Omega)\right]^{2n+1-k}$ such that $d_Hf_p:\bH^n\rightarrow \R^{2n+1-k}$ is surjective, 
\[
S\cap B(p,r)=\left\{ q \in \Omega \ : \ f(q)=0 \right\} \qquad \text{and} \qquad T_{\bH}S(p)=\ker(d_Hf_p).
\]
For any $q\in S\cap B(p,r_0)$, using \eqref{TaylorExp} we have that
\begin{equation}\label{H1}
\big\|d_Hf_p(p^{-1}q)\big\|_{\R^{2n+1-k}}=\mathcal{O}\left(d(p,q)^{1+\alpha}\right),
\end{equation}
while by $H$-linearity of $d_Hf_p$ there is $c:=c(x,f)>0$ such that
\begin{equation}\label{H2}
\big\|d_Hf_p(p^{-1}q)\big\|_{\R^{2n+1-k}}\ge c\,d\left(p^{-1}q,T_{\bH}S(p)\right).
\end{equation}
Indeed, if $L:\bH^n\to \R^{2n+1-k}$ is $H$-linear, then $\ker(L)$ is a vertical subgroup and, by the intrinsic decomposition, there exists $V$ horizontal such that
\[
\ker(L)\cdot V=\bH^n.
\] 
Then $L:V\to\R^{2n+1-k}$ is injective and thus there exists $C>0$ such that
\[
\|L(v)\|_{\R^{2n+1-k}}\ge C\|v\| \qquad \text{for all } v \in V.
\]
Finally, by \eqref{H1} and \eqref{H2} we can find $\lambda>0$ such that \eqref{Eq:ConvGeomLem} holds.
\end{proof}

We now prove that vertical subgroups in the Grassmannian have horizontal complements that can be chosen in a Lipschitz-continuous way.

\begin{definition}
	Let $\nu \in \ch_1$. We denote by $\N(\nu)$ the $1$-codimensional normal subgroup orthogonal to $\nu$, namely
	\[
	\N(\nu) := \left\{ p \in \bH^n \: : \: \langle \nu, \pi(p) \rangle = 0 \right\},
	\]
	where $\pi$ is the projection of $\bH^n$ onto the first layer $\ch_1$, defined as follows:
	\[
	\pi(p) := \sum_{i = 1}^n (p_i X_i + p_{n+i}Y_i).
	\]
\end{definition}

\begin{lemma} \label{lemma.2.32}
Let $T \in \cG(\bH^n,k)$ for $n < k \le 2n$. Then we can find unit vectors $\nu_1,\ldots, \nu_{2n+1-k} \in \ch_1$ such that
\[
S := \exp \left( \mathrm{span}\{\nu_1,\ldots,\nu_{2n+1-k}\} \right)
\]
is a horizontal complement of $T$ and the choice of the unit vectors is Lipschitz-continuous with respect to $T$.

Furthermore $T = \cap_{i} \N(\nu_i)$ and for all $p \in \bH^n$ and all $\alpha \in (0,1]$, the following inclusion holds:
\begin{equation}\label{eq:parabinclusion}
Q_\alpha(p,T,\lambda) \subseteq \bigcap_{i= 1}^{2n+1-k} Q_\alpha \left(p,\N(\nu_i),\lambda \right).
\end{equation}
\end{lemma}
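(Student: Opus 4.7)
Since $n < k \le 2n$, the subgroup $T \in \cG(\bH^n,k)$ is vertical, so $\mathfrak{t} := \Lie(T)$ contains the center $\mathfrak{h}_2$ and decomposes as $\mathfrak{t} = (\mathfrak{t} \cap \mathfrak{h}_1) \oplus \mathfrak{h}_2$ with $\dim(\mathfrak{t} \cap \mathfrak{h}_1) = k-1$. The natural candidate for the horizontal complement is then $S = \exp(W_T)$, where $W_T := (\mathfrak{t} \cap \mathfrak{h}_1)^{\perp}$ denotes the orthogonal complement inside $\mathfrak{h}_1$ (with respect to the fixed scalar product). This $W_T$ has dimension $2n+1-k$, so we only have to produce an orthonormal basis $\{\nu_1,\ldots,\nu_{2n+1-k}\}$ of $W_T$ that depends on $T$ in a Lipschitz way.

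To obtain the Lipschitz dependence, I would fix a reference $T_0$ with an orthonormal basis $\{e_1,\ldots,e_{2n+1-k}\}$ of $W_{T_0}$, and for $T$ in a neighborhood of $T_0$ in $\cG(\bH^n,k)$ define preliminary vectors
\[
\tilde\nu_j(T) := \mathrm{proj}_{W_T}(e_j), \qquad j = 1,\ldots,2n+1-k.
\]
The Grassmannian distance $\rho$ from the last remark in Section~2 is bilipschitz-equivalent to the Euclidean Grassmannian distance on the finite-dimensional space of $(k-1)$-planes in $\mathfrak{h}_1$, and the orthogonal projection onto a subspace depends analytically on that subspace away from singular configurations. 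Thus $T \mapsto \tilde\nu_j(T)$ is smooth (hence Lipschitz) on a neighborhood of $T_0$, and the $\tilde\nu_j(T)$ are linearly independent for $T$ close enough to $T_0$. Applying the Gram–Schmidt process, which is a smooth map on tuples of linearly independent vectors, yields the desired orthonormal basis $\{\nu_1(T),\ldots,\nu_{2n+1-k}(T)\}$ of $W_T$ with Lipschitz dependence on $T$.

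With this basis in hand, the three required properties follow quickly. Because the $\nu_j$ are horizontal and span a subspace complementary to $\mathfrak{t} \cap \mathfrak{h}_1$ in $\mathfrak{h}_1$, the group $S = \exp(\mathrm{span}\{\nu_1,\ldots,\nu_{2n+1-k}\})$ is horizontal, intersects $T$ trivially, and satisfies $\bH^n = T \cdot S$ (with $T$ normal, since it is vertical). For the identity $T = \bigcap_j \N(\nu_j)$, I would just note that each vertical hyperplane $\N(\nu_j) = \exp(\nu_j^{\perp_{H^1}} \oplus \mathfrak{h}_2)$ contains $T$, so the intersection contains $T$; on the Lie algebra side the intersection equals $\bigl(\bigcap_j \nu_j^{\perp}\bigr) \oplus \mathfrak{h}_2 = (\mathrm{span}\{\nu_j\})^{\perp} \oplus \mathfrak{h}_2 = (\mathfrak{t}\cap\mathfrak{h}_1) \oplus \mathfrak{h}_2 = \mathfrak{t}$, giving the reverse inclusion. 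Finally, since $T \subseteq \N(\nu_j)$ implies $d(x,\N(\nu_j)) \le d(x,T)$ for every $x \in \bH^n$, applying this to $x = p^{-1} y$ immediately gives the paraboloid inclusion \eqref{eq:parabinclusion}.

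The only genuinely delicate point is the Lipschitz dependence in step two: strictly speaking, this is a local statement in $\cG(\bH^n,k)$ (there may be topological obstructions to a global continuous choice of orthonormal frame), and I would either phrase the conclusion locally around a given $T$ or rely on a partition-of-unity argument over a compact portion of the Grassmannian. The other two items are purely algebraic consequences of choosing $\{\nu_j\}$ orthonormal in $W_T$.
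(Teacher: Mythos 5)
There is a genuine gap at the very first step: you take the horizontal complement to be $S=\exp(W_T)$ with $W_T=(\mathfrak{t}\cap\mathfrak{h}_1)^{\perp}$, but $\exp(W_T)$ is in general \emph{not a subgroup} of $\bH^n$. By the Baker--Campbell--Hausdorff formula, $\exp(W)$ is a subgroup for a linear subspace $W\subset\mathfrak{h}_1$ if and only if $W$ is isotropic for the bracket, i.e.\ $[v,w]=0$ for all $v,w\in W$, and the Euclidean orthogonal complement of $\mathfrak{t}\cap\mathfrak{h}_1$ inside $\mathfrak{h}_1$ need not be isotropic. Concretely, in $\bH^2$ take $k=3$ and $\mathfrak{t}\cap\mathfrak{h}_1=\mathrm{span}\{X_1,\,X_2+Y_1\}$; this vertical $T$ does belong to $\cG(\bH^2,3)$, since $\exp(\mathrm{span}\{Y_1,Y_2\})$ is a horizontal complement. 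Then $W_T=\mathrm{span}\{X_2-Y_1,\,Y_2\}$ and $[X_2-Y_1,\,Y_2]=[X_2,Y_2]\neq 0$, so $W_T$ is not isotropic and your candidate $S$ is not a homogeneous subgroup at all. This is exactly the point where the paper's proof leans on an external construction: it invokes \cite[Lemma~3.26]{FSSC6}, which produces an \emph{isotropic} horizontal complement of a vertical subgroup, and it extracts the Lipschitz dependence on $T$ from that explicit construction (via the continuity observed in \cite[Lemma~2.32]{MSSC10} plus linearity). Note that the transposed statement you may have had in mind --- that $\langle f_1,\dots,f_k\rangle^{\perp_{H^1}}\oplus\langle e_{2n+1}\rangle$ is a vertical complement of a \emph{horizontal} subgroup, as in the remark after Proposition~\ref{prop:idsl} --- is true, because any subspace of $\mathfrak{h}_1$ plus the center is automatically a subalgebra; the asymmetry is precisely the isotropy constraint on the horizontal side.

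The remaining parts of your argument are sound and essentially coincide with the paper where they apply: once an orthonormal horizontal frame $\{\nu_j\}$ spanning a genuine complement is chosen, each $\N(\nu_j)$ is a codimension-one vertical subgroup containing $T$, the identity $T=\bigcap_j\N(\nu_j)$ follows by the dimension count you give, and the inclusion \eqref{eq:parabinclusion} is immediate from $T\subseteq\N(\nu_j)$, hence $d(x,\N(\nu_j))\le d(x,T)$, applied at $x=p^{-1}y$. Your local projection-plus-Gram--Schmidt device is a reasonable template for the Lipschitz selection (and your caveat about the local nature of frame choices is well taken), but it must be run on the correct isotropic complement furnished by \cite[Lemma~3.26]{FSSC6} rather than on $W_T$; as written, the proof does not go through.
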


\begin{proof}
By \cite[Lemma 3.26]{FSSC6}, we can always find unit vectors $\nu_1,\ldots,\nu_{2n+1-k} \in \ch_1$ such that the subgroup
\[
S := \exp \left( \mathrm{span}\{\nu_1,\ldots,\nu_{2n+1-k}\} \right)
\]
is a horizontal complement of $T$ and, as mentioned in \cite[Lemma 2.32]{MSSC10}, the choice of the unit vectors is continuous with respect to $T$. This immediately implies Lipschitz-continuity by the linearity of the argument used to prove \cite[Lemma 3.26]{FSSC6}. 

Now denote by $\mathfrak{t}\subset\ch$ the Lie algebra of $T$ and, for every $i$, consider the Lie algebra obtained by removing $\nu_i$, namely
\[
\ch^i:=\mathrm{Span}\left\{\mathfrak{t},\nu_1,\ldots,\nu_{i-1},\nu_{i+1},\ldots,\nu_{2n+1-k}\right\}.
\]
It follows that $\N(\nu_i)=\exp(\mathfrak{h}^i)\in \cG(\bH^n,2n)$ and $T=\cap_i\N(\nu_i)$. Finally, since 
\[
d(p,\N(\nu_i))\le d(p,T),
\] 
we have that $Q_\alpha(p,T,\lambda)\subset Q_\alpha(p,\N(\nu_i),\lambda)$ for all $i=1,\ldots,2n+1-k$, from which the inclusion \eqref{eq:parabinclusion} easily follows.
\end{proof}

The following result tells us that the intersection of two cylinders can be included in a cylinder with axis of {\itshape strictly lower} dimension. The proof is similar to the Euclidean framework, which can be found in \cite[Lemma 2.1]{ninidu1}.

\begin{lemma}\label{lemma:tube}
Let $S,T \in \cG(\bH^n,k)$ with $n < k \le 2n$ and set $\vartheta:=\rho(S,T)$. Then there are $Z \in \cG(\bH^n,k-1)$ and $\ell > 0$ such that for every $\eta>0$ we have
\[
\cC\left(S, \eta\right)\cap \cC\left(T, \eta\right)\subseteq \cC\left(Z, \frac{3n\eta}{\ell \vartheta}\right).
\]
\end{lemma}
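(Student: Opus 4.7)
The plan is to reduce the statement to its Euclidean analogue on the horizontal layer $\ch_1\cong\R^{2n}$ and then lift the resulting subspace back to $\bH^n$. Since $n<k\le 2n$, both $S$ and $T$ are vertical $k$-homogeneous subgroups and therefore contain the center $\mathbb{T}$. My first step is to observe that for any vertical subgroup $V\subset\bH^n$ and any $p\in\bH^n$, minimizing $\|v^{-1}p\|$ over $v\in V$ allows the vertical coordinate of $v$ to be chosen freely so as to cancel the $(2n{+}1)$-th component of $v^{-1}p$; consequently,
\[
d(p,V)\;=\;d_{\R^{2n}}(p^h,V^h),
\]
where $p^h=(p_1,\dots,p_{2n})$ and $V^h\subset\R^{2n}$ is the horizontal part of $V$ (of Euclidean dimension $\dim V-1$). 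Applying this identity to both $V=S$ and $V=T$, I would identify
\[
\cC(S,\eta)\cap\cC(T,\eta)\;=\;(\pi^h)^{-1}\!\bigl(\cC_{\R^{2n}}(S^h,\eta)\cap\cC_{\R^{2n}}(T^h,\eta)\bigr),
\]
where $\pi^h$ is the horizontal projection and $S^h,T^h$ are $(k-1)$-dimensional subspaces of $\R^{2n}$.

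Next, I would relate the Grassmannian distance $\vartheta=\rho(S,T)$ to the Euclidean principal angle $\vartheta^h$ between $S^h$ and $T^h$. Using the Lipschitz-continuous horizontal complements provided by Lemma \ref{lemma.2.32} together with the projection estimates in Proposition \ref{prop:idsl}, the projections appearing in the definition of $\rho$ reduce, up to a dimensional constant, to the usual Euclidean orthogonal projections on $S^h,T^h$; hence $\vartheta^h\ge c_n\vartheta$ for some constant $c_n>0$ depending only on $n$. With this comparison in hand, the Euclidean analogue \cite[Lemma 2.1]{ninidu1}, applied to the $(k-1)$-dimensional subspaces $S^h,T^h\subset\R^{2n}$, furnishes a linear subspace $W\subset\R^{2n}$ of dimension $k-2$ and a universal constant $\ell>0$ such that
\[
\cC_{\R^{2n}}(S^h,\eta)\cap\cC_{\R^{2n}}(T^h,\eta)\;\subseteq\;\cC_{\R^{2n}}\!\Bigl(W,\tfrac{3n\eta}{\ell\vartheta}\Bigr).
\]

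Finally, I would lift $W$ to $\bH^n$ by setting $Z:=\exp(W\oplus\ch_2)$, a vertical homogeneous subgroup of dimension $(k-2)+1=k-1$ whose horizontal part is exactly $W$. Applying the distance-reduction identity from the first paragraph to $V=Z$ gives $d(p,Z)=d_{\R^{2n}}(p^h,W)$, which combined with the Euclidean inclusion above yields the claimed bound. The main technical obstacle I anticipate is the edge case $k=n+1$: there $k-1=n$ and the Grassmannian $\cG(\bH^n,n)$ consists of horizontal (Lagrangian) $n$-subgroups rather than vertical ones, so the subgroup $Z$ constructed above is not a priori an element of $\cG(\bH^n,k-1)$. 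I would handle this either by replacing $Z$ with a horizontal Lagrangian subspace Euclidean-close to $W$ and absorbing the additional dimensional factor into $\ell$, or by interpreting the conclusion as an inclusion into a cylinder over a $(k-1)$-dimensional homogeneous subgroup carrying the induced left-invariant distance, without insisting that it lie in the intrinsic Grassmannian.
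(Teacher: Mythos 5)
Your reduction to the horizontal layer is correct and is a genuinely different route from the paper's: since a vertical subgroup $V$ contains the centre $\T$, the last coordinate of $v^{-1}p$ can always be cancelled, so indeed $d(p,V)=d_{\R^{2n}}(p^h,V^h)$ and the intersection of the two Heisenberg cylinders is the preimage under the horizontal projection of an intersection of Euclidean cylinders in $\R^{2n}$. The paper instead reruns the Euclidean argument of \cite[Lemma 2.1]{ninidu1} intrinsically (it extracts a horizontal unit vector $e\in T^\perp$ with $\|\pi_{S^\perp}(e)\|=\ell\vartheta$, sets $Z:=\mathrm{span}\{e,e_{k+1},\dots,e_{2n+1}\}^\perp$, and estimates $\|\pi_{Z^\perp}(x)\|$ with the group operation). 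Your remark on the edge case $k=n+1$ is well taken --- the paper's $Z$ is likewise a vertical $n$-dimensional subgroup, hence not literally in $\cG(\bH^n,n)$; of your two fixes, only the second is viable (keep $Z$ vertical and drop the Grassmannian membership, which is never used in Lemma \ref{lemma:keyHn}), since the cylinder intersection is unbounded in the vertical direction and cannot be contained in any finite-radius cylinder over a horizontal subgroup.

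The genuine gap is the comparison $\vartheta^h\ge c_n\vartheta$, which is false. The metric $\rho$ is built from the homogeneous distance, whose second-layer component carries a square root, and the intrinsic projections onto two nearby vertical subgroups differ in their vertical coordinate at first order in the horizontal angle. Concretely, in $\bH^1$ let $S=\exp(\mathrm{span}\{X_1,T\})$ and let $T_\eps$ be the vertical plane over the line $\{y=\eps x\}$; for $x=(0,1,0)$ one computes $\pi_S(x)=0$ and $\pi_{T_\eps}(x)=\bigl(\tfrac{\eps}{1+\eps^2},\tfrac{\eps^2}{1+\eps^2},\tfrac{2\eps}{1+\eps^2}\bigr)$, whose homogeneous norm is $\approx\sqrt{2\eps}$. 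Hence $\vartheta=\rho(S,T_\eps)\approx\sqrt{2\eps}$ while the Euclidean angle between the horizontal parts is $\vartheta^h\approx\eps\approx\vartheta^2/2$. Your Euclidean step therefore only delivers a cylinder of radius $\sim\eta/\vartheta^h\sim\eta/\vartheta^2$, strictly weaker than the claimed $3n\eta/(\ell\vartheta)$ in the relevant regime $\vartheta\to0$, and the inequality you invoke to upgrade it cannot be repaired (the same example, e.g.\ the point $(\eta/(2\eps),0,0)$, shows that no cylinder of radius $O(\eta/\vartheta)$ can contain the intersection). If you carry the honest bound $\eta/\vartheta^2$ through Lemma \ref{lemma:keyHn} you obtain $\rho(V_x,V_y)\lesssim d(x,y)^{(1+\alpha)/2}\le C\,d(x,y)^{\alpha}$, so the downstream H\"older estimate survives; but as a proof of the lemma as stated, the angle-comparison step does not close.
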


\begin{proof} 
First, we claim that there is $e\in T^\perp$ with $\|e\|=1$ such that $\|\pi_{S^\perp} (e)\|= \ell \vartheta$. Indeed, the triangular inequality gives
\[
\vartheta \ge \rho(S^\perp,T^\perp) - C_T - C_S \ge \sup_{ \substack{t \in T^\perp \\[.2em] \|t\|=1}} \|\pi_{S^\perp}(t)\| - C_T - C_S,
\]
where the (positive and finite) constants $C_T$ and $C_S$ are defined as
\[
C_T = \max_{\|p\|=1} d\left(\pi_T(p),\pi_{T^\perp}(p) \right) \qquad \text{and} \qquad C_S = \max_{\|p\|=1} d \left(\pi_S(p),\pi_{S^\perp}(p)\right).
\]
It follows that
\[
\vartheta + C_T + C_S \ge \sup_{ \substack{t \in T^\perp \\[.2em] \|t\|=1}} \|\pi_{S^\perp}(t)\| > 0,
\]
which means that there must be constants $L_1,L_2 > 0$ such that
\[
L_1 \vartheta \ge \sup_{ \substack{t \in T^\perp \\[.2em] \|t\|=1}} \|\pi_{S^\perp}(t)\| \ge L_2 \vartheta.
\]
By compactness, we can find $e \in T^\perp$ and $\ell \in [L_2,L_1]$ such that
\[
\|\pi_{S^\perp} (e)\| = \ell \vartheta,
\]
and this proves our claim. Now consider an orthonormal basis $e_{k+1},\ldots,e_{2n+1}$ of $S^\perp$ and define the $(k-1)$-dimensional subspace
\[
Z:=\mathrm{span}\{e,e_{k+1},\ldots,e_{2n+1}\}^\perp.
\]
For $p\in \cC(S, \eta)\cap \cC(T,\eta)$, by \Cref{prop:idsl} we have
\[
d(p,S) < \eta \implies \| \pi_{S^\perp}(p) \| < \eta \qquad \text{and} \qquad d(p,T) < \eta \implies \| \pi_{T^\perp}(p) \| < \eta,
\]
which means (taking into account that the $e_i$'s give a basis of $S^\perp$) that
\[
\|p^{-1}\cdot e_i\|\le \eta \quad \text{for every }i=k+1,\ldots, 2n+1 \qquad \text{and} \qquad \|p^{-1}\cdot e\|\le \eta.
\]
Now set $e':=\tfrac{\pi_{S} (e)}{\|\pi_{S} (e)\|}$ and consider the resulting orthonormal basis $\{e',e_{k+1},\ldots, e_{2n+1}\}$ of $Z^\perp$. Then, for $p \in \cC(S, \eta)\cap \cC(T, \eta)$, the triangular inequality gives
\[ \begin{aligned}
\|p^{-1}\cdot e'\|& =\frac{1}{\|\pi_{S} (e)\|} \|p^{-1}\cdot \pi_{S}(e)\|
\\ & =\frac{1}{\ell \vartheta} \left\|p^{-1}\cdot \left(e-\sum_{i=k+1}^{2n+1} (e^{-1}\cdot e_i)e_i\right)\right\|
\\ & \leq \frac{1}{\ell \vartheta}(2n+1-k+1)\eta.
\end{aligned} \]
As a consequence, we have
\[ \begin{aligned}
\|\pi_{Z^\perp}(p) \| & \le \|p^{-1}\cdot e'\|+\sum_{i=k+1}^{2n+1} \|p^{-1}\cdot e_i\|
\\ & \le  \frac{1}{\ell \vartheta}(2n+1-k+1)\eta +\sum_{i=k+1}^{2n+1} \|p^{-1}\cdot e_i\|
\\ & \le \frac{1}{\ell \vartheta} (2(2n+1)-2k+1)\eta\leq \frac{3n}{\ell \vartheta}\eta,
\end{aligned} \]
and this concludes the proof.
\end{proof}

\section{Proof of the main results} \label{sec:proof}

The goal of this section is to prove \Cref{thm.1.1} and \Cref{prop.1.2}. Indeed, the latter follows by adapting the argument in \cite[Section 3]{ninidu1} to Heisenberg groups:

%We follow closely the techniques developed, respectively, in \cite[Section 3]{ninidu1} and \cite[Section 3]{MSSC10} respectively.

\begin{proof}[Proof of \Cref{prop.1.2}]
Let $E \subset \bH^n$ be $C^{1,\alpha}$-rectifiable and let $\{ \Gamma_i \}_{i \in \N}$ be the family of $(\mathbf{C}_H^{1,\alpha},\bH)$-regular surfaces such that
\[
\H^k\left(E \sm \bigcup_{i\in \mathbb{N}}\Gamma_i\right)=0.
\]
Then $E$ is $\cH^k$-rectifiable; so, by \cite[Theorem 3.15]{MSSC10}, for $\cH^k$-a.e. $p \in E$ there exists an approximate tangent subgroup $T_p \in \cG(\bH^n,k)$ and $\Theta_\ast^k(E,p)>0$.

For every $i\in \N$, denote by $E_i$ the set $E\cap \Gamma_i$. By \Cref{lemma:densities} for $\H^k$-a.e. $p\in E_i$, we have that the density satisfies
\begin{equation}\label{eq:E_i}
    \Theta^k \left(E\setminus E_i,p\right)=0. 
\end{equation}
Moreover, by \Cref{ConvGeomLem} there is $\lambda>0$ such that
\[
E_i\cap B(p,r)\sm Q_\alpha(p,T_p,\lambda)=\varnothing
\] 
for every $p\in E_i$ and $r>0$ small enough. Finally, the conclusion follows by the latter and \eqref{eq:E_i}.
\end{proof}

\subsection{Technical preliminaries}

To prove \Cref{thm.1.1}, we first need to obtain some technical lemmas. Recall that we are assuming, for $\cH^{k_m}$-a.e. $p \in E$, the existence of $V_p \in \cG(\bH^n,k)$ and $\lambda > 0$ such that
\[
\lim_{r \to 0^+} r^{-k_m} \cH^{k_m} \left(E \cap B(p,r) \sm Q_\alpha(p,V_p,\lambda)\right) = 0.
\]
Therefore, in the following we will often identify $p$ with the unique vertical subgroup $V_p$ that satisfies this property. More precisely, we use the notation
\[
C_\alpha^r(p) := \cC(V_p,\lambda r^{1+\alpha})
\]
for cylinders with axis $V_p$. Notice that the parameter $\lambda>0$ does not appear on the left-hand side because it has already been fixed above.

\begin{lemma}\label{lemma:keyHn}
Let $E\subset\bH^n$, $n < k \le 2n$, and let $M,\lambda,\delta>0$, $0<\alpha,\,r\le 1$ be fixed. Suppose that for every $z\in E$ and for every $s>0$ we have 
\begin{equation}\label{eq:upperhypHn}
\H^{k_m} \left(E\cap B(z,s)\right) \le M s^{k_m}.
\end{equation}
Let $p,q$ be any two points such that $d(p,q) \le r$ and $V_p,V_q \in \cG(\bH^n,k)$ satisfy
\begin{equation}\label{eq:lowerboundHn}
\begin{cases}
\H^{k_m} \left( E \cap B(p, r) \right) \ge \delta  r^{k_m} \\
\H^{k_m} \left( E \cap B(q, r) \right) \ge \delta  r^{k_m}
\end{cases}
\end{equation}
and
\begin{equation}\label{eq:upperboundHn}
\begin{cases}
\H^{k_m} \left( E \cap B(p, 2r)\setminus C_\alpha^{r}(p) \right) \le \eps  r^{k_m} \\
\H^{k_m} \left(E\cap B(q, 2r)\setminus C_\alpha^{r}(q)\right)\le \eps  r^{k_m}
\end{cases}
\end{equation}
where $\eps \le \tfrac{\delta}{4}$. Then there exists a positive constant $C := C(n,\delta,M,\lambda)$ such that
\[
\rho(V_p,V_q)\leq C r^\alpha.
\]
\end{lemma}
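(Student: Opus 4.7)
The plan is to combine the lower mass bound on $B(x,r)$ with the two cylinder approximations to localize most of the mass inside the intersection $C_\alpha^r(x) \cap C_\alpha^r(y)$, then use Lemma \ref{lemma:tube} to trap this intersection in a tube around a $(k-1)$-dimensional subgroup, and finally exploit the upper density hypothesis \eqref{eq:upperhypHn} to force $\vartheta := \rho(V_x,V_y)$ to be small. This parallels the Euclidean argument in \cite{ninidu1}.

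First I would observe that $d(x,y)\le r$ implies $B(x,r) \subseteq B(x,2r)\cap B(y,2r)$, so \eqref{eq:upperboundHn} together with $\eps\le \delta/4$ gives
$$
\H^{k_m}\bigl(E\cap B(x,r)\setminus (C_\alpha^r(x)\cap C_\alpha^r(y))\bigr)\le 2\eps\, r^{k_m}\le \tfrac{\delta}{2}r^{k_m}.
$$
Subtracting from \eqref{eq:lowerboundHn} yields $\H^{k_m}(E\cap B(x,r)\cap C_\alpha^r(x)\cap C_\alpha^r(y))\ge \tfrac{\delta}{2}r^{k_m}$. Applying Lemma \ref{lemma:tube} with $\eta=\lambda r^{1+\alpha}$ produces $Z\in\cG(\bH^n,k-1)$ and $\ell>0$ such that
$$
C_\alpha^r(x)\cap C_\alpha^r(y)\subseteq \cC(Z,\eta_\vartheta),\qquad \eta_\vartheta:=\frac{3n\lambda\, r^{1+\alpha}}{\ell\,\vartheta}.
$$

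Next I would estimate the $\H^{k_m}$-mass of $E$ inside this tube. Let $m$ denote the metric dimension of $Z$, so $m=k-1$ when $k=n+1$ (in which case $Z$ is horizontal) and $m=k$ when $k\ge n+2$ (in which case $Z$ is vertical). Using the semidirect decomposition $\bH^n = Z\cdot Z^\perp$, the tube $\cC(Z,\eta_\vartheta)\cap B(x,r)$ can be covered by at most $c_n (r/\eta_\vartheta)^m$ balls of $\bH^n$ of radius comparable to $\eta_\vartheta$: it suffices to cover the portion of $Z$ inside $B(x,r)$ (which has $m$-dimensional Hausdorff mass $\lesssim r^m$) by small balls and inflate each of them by $\eta_\vartheta$ in the $Z^\perp$-direction. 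Applying \eqref{eq:upperhypHn} on each such ball gives
$$
\H^{k_m}\bigl(E\cap B(x,r)\cap \cC(Z,\eta_\vartheta)\bigr)\le c_n\, M\, r^m\,\eta_\vartheta^{k_m-m}.
$$

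Combining this upper bound with the lower bound from the first step and substituting $\eta_\vartheta$ produces
$$
\delta\, r^{k_m}\lesssim M\, r^m\Bigl(\frac{r^{1+\alpha}}{\vartheta}\Bigr)^{k_m-m},
$$
and isolating $\vartheta$ collapses the factors of $r$ to exactly $r^\alpha$, since $m+(1+\alpha)(k_m-m)-k_m=\alpha(k_m-m)$. This yields $\vartheta \le C(n,\delta,M,\lambda)\,r^\alpha$ as required. The principal technical obstacle is the covering estimate: one has to verify that the number of Heisenberg balls needed to cover a tube around a $(k-1)$-dimensional intrinsic subgroup scales as $(r/\eta_\vartheta)^m$ independently of whether $Z$ is horizontal (the borderline case $k=n+1$, $k_m-m=2$) or vertical (the generic case $k\ge n+2$, $k_m-m=1$); the two cases give different intermediate exponents but, after isolating $\vartheta$, the surviving power of $r$ is the same $r^\alpha$ in both.
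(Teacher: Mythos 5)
Your proposal is correct and follows essentially the same route as the paper: it establishes $\H^{k_m}\bigl(E\cap B(x,r)\cap C_\alpha^r(x)\cap C_\alpha^r(y)\bigr)\ge \tfrac{\delta}{2}r^{k_m}$, traps the cylinder intersection in a tube around $Z$ via Lemma \ref{lemma:tube}, and plays the covering/upper-density bound for the tube against this lower bound (the paper phrases it as a contradiction with $\vartheta>Cr^\alpha$, which also disposes of the case $\eta_\vartheta>r$ that your direct version should dispatch separately, though that case trivially gives $\vartheta\lesssim r^\alpha$). Your case distinction on the metric dimension of $Z$ is in fact more careful than the paper, which always treats $Z$ as vertical with metric dimension $k_m-1$ (consistent with the construction in Lemma \ref{lemma:tube}, where $Z$ is the orthogonal complement of a horizontal span and hence contains the center even when $k=n+1$); as you note, either exponent yields the same final power $r^\alpha$.
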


\begin{proof}
We argue by contradiction. Suppose that we have 
\[
\vartheta:=\rho(V_p,V_q)>Cr^\alpha,
\] 
with $C=C(n,\delta,M,\lambda)>0$ to be chosen later. First, we observe that by the assumptions \eqref{eq:lowerboundHn} and \eqref{eq:upperboundHn} we have the estimate
\[
\cH^k(E\cap C_\alpha^r(p)\cap B(p,r))\geq (\delta-\eps)r^k.
\] 
Furthermore, using the trivial inclusion  
\[
E\cap C_\alpha^r(p)\cap B(p,r)\sm C_\alpha^r(q)\subseteq  E\cap B(q,2r)\setminus C_\alpha^r(q),
\]
it follows that $\cH^k(E\cap C_\alpha^r(p)\cap B(p,r)\sm C_\alpha^r(q))\leq \eps r^k$, and thus
\[
(\delta-\eps)r^k\leq \cH^k\left(E\cap C_\alpha^r(p)\cap B(p,r)\right)\le \cH^k\left(E \cap C_\alpha^r(p)\cap B(p,r)\cap C_\alpha^r(q)\right) + \eps r^k.
\]
Finally, the assumption $\eps \le \delta/4$ implies the estimate
\begin{equation}\label{eq:positive-mass}
\cH^{k_m}\left(E\cap C^r_\alpha(p)\cap C^r_\alpha(q)\cap B(p,r)\right)\ge \frac{\delta}{2}r^{k_m},
\end{equation}
which, in turn, shows that $B(p,r)\cap C^r_\alpha(p)\cap C^r_\alpha(q)\ne \varnothing$.

Now, since $V_p$ and $V_q$ are vertical $k$-subgroups, they intersect; hence, by \Cref{lemma:tube} we have that there exists a vertical $(k-1)$-subgroup $Z$ such that
\[
C^r_\alpha(p)\cap C^r_\alpha(q) \subset \cC\left(Z,\frac{4n\lambda}{\ell\vartheta} r^{1+\alpha}\right),
\]
where $\ell>0$ is given by \Cref{lemma:tube}. We set
\[
\eta:=\frac{4n\lambda}{\ell \vartheta} r^{1+\alpha} \qquad \text{and} \qquad E_\eta:=E\cap B(p,r)\cap \cC(Z,\eta).
\]
Notice that we can assume $\eta \le r$, since otherwise we would have
\[
\eta > r \implies \vartheta < \frac{4n\lambda}{\ell} r^\alpha,
\] 
which is not possible since we have $\vartheta>Cr^\alpha$. We now claim that 
\begin{equation}\label{eq:upperboundClaim}
\H^{k_m}(E_\eta)<\frac{4n \lambda C_1 M}{\ell C}r^{k_m},
\end{equation}
where $C_1=C_1(n,k_m)>0$. Indeed, clearly, the set $E_\eta$ can be covered with $h$ balls of radius $\eta r$, where $h\le C_1\eta^{-k_m+1}$  and $C_1$ depends only on $n$ and $k_m$. Thus, by \eqref{eq:upperhypHn} we have, using that $0<r<1$ and $\vartheta>Cr^\alpha$, that
\[
\H^{k_m}(E_\eta) \le C_1M\eta r^{k_m}=\frac{4n \lambda C_1M r^{1+\alpha}}{\ell\theta}r^{k_m} < \frac{4n\lambda C_1M}{\ell C}r^{k_m},
\]
and this proves the claim \eqref{eq:upperboundClaim}. Finally, using the estimate above together with \eqref{eq:positive-mass}, yields the following inequality:
\[
\frac{\delta}{2}r^{k_m}\le \H^{k_m}(E_\eta)< \frac{4n \lambda C_1M}{\ell C}r^{k_m}.
\]
Choosing $C:=[4n/(\delta\ell)] \max\left\{C_1M\lambda,\lambda\right\}$ immediately yields to a contradiction, concluding the proof of the lemma.
\end{proof}

A simple application of this result allows us to slightly improve the statement of \Cref{lemma.2.32}, obtaining the local $\alpha$-H\"older continuity of the map $p\mapsto V_p$.

\begin{lemma} \label{lemma.2.32.holder}
Let $n < k \le 2n$ and fix $M,\lambda,\delta>0$, $0<\alpha,\,r\le 1$. Let $\Omega\subset\bH^n$ be an open subset with $\operatorname{diam}(\Omega)>2r$ and $E\subseteq \Omega$ as in \Cref{lemma:keyHn}. Consider
\[
\Omega \ni p \longmapsto V_p \in \cG(\bH,k),
\]
and assume that for every $p,q\in E$ with $d(p,q)<r$, the conditions \eqref{eq:lowerboundHn}-\eqref{eq:upperboundHn} hold. If we denote by
\[
W_p = \operatorname{span}\left( \exp \{\nu_1(p),\ldots,\nu_{2n+1-k}(p)\} \right)
\]
the horizontal complement of $T:=V_p$ in \Cref{lemma.2.32}, then the mappings
\[
p \longmapsto \nu_j(p) \qquad \text{for } j = 1,\ldots,2n+1-k,
\]
are $\alpha$-H\"older continuous in some $\Omega'\subset \Omega$.
\end{lemma}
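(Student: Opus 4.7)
The plan is to combine the H\"older-type control at the level of the Grassmannian metric $\rho$ provided by Lemma~\ref{lemma:keyHn} with the Lipschitz dependence of the vectors $\nu_j$ on $T \in \cG(\bH^n,k)$ provided by Lemma~\ref{lemma.2.32}. The whole proof reduces to a careful bookkeeping of constants and a clean choice of the subset $U'$ on which the density hypotheses can be applied uniformly.

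First I would identify the subset $U' \subset U$ as the set of points $p \in E \cap U$ at which the density lower bound $\H^{k_m}(E \cap B(p,s)) \ge \delta s^{k_m}$ holds for every $s$ up to a fixed uniform scale $r_0 \le r$. By standard density theory (e.g.\ Lemma~\ref{lemma:densities}, possibly after a harmless reduction of $\delta$) such a set can be taken to cover $\H^{k_m}$-a.e.\ point of $E \cap U$, and $U'$ inherits openness/measurability properties sufficient for our purposes. Then for any $p, q \in U'$ with $d(p,q) \le r_0$, applying the definition of $U'$ at the common scale $s = d(p,q)$ shows that both density estimates \eqref{eq:lowerboundHn} are in force simultaneously, so by the standing hypothesis of the lemma the corresponding subgroups $V_p, V_q$ also satisfy \eqref{eq:upperboundHn}. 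Lemma~\ref{lemma:keyHn} then yields
\[
\rho(V_p, V_q) \le C \, d(p,q)^\alpha, \qquad C = C(n, \delta, M, \lambda).
\]

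Second, by Lemma~\ref{lemma.2.32} the assignment $T \mapsto (\nu_1(T), \ldots, \nu_{2n+1-k}(T))$ taking a vertical subgroup of $\cG(\bH^n,k)$ to its chosen orthonormal horizontal complement is Lipschitz continuous with respect to $\rho$; call $L$ its Lipschitz constant. Composing the two estimates gives, for every $j = 1, \ldots, 2n+1-k$ and every pair $p, q \in U'$ with $d(p,q) \le r_0$,
\[
\|\nu_j(p) - \nu_j(q)\| \le L \, \rho(V_p, V_q) \le LC \, d(p,q)^\alpha,
\]
which is the desired $\alpha$-H\"older continuity on $U'$ with H\"older constant $LC$.

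The only subtle step, and the one I would expect to require the most care, is the choice of $U'$: one has to check that the density lower bound can be arranged at every scale below one common $r_0$ for every $p \in U'$, so that Lemma~\ref{lemma:keyHn} applies uniformly to arbitrary close pairs (rather than pairs for which the scale $r$ depends on each individual point). Once this uniform scale is secured, the H\"older continuity is a one-line composition of two results already proved, and no further Heisenberg-specific analysis is needed.
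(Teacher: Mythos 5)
Your proposal is correct and follows essentially the same route as the paper: compose the estimate $\rho(V_p,V_q)\le C\,d(p,q)^\alpha$ from Lemma~\ref{lemma:keyHn} with the Lipschitz dependence of the $\nu_j$ on $V_p$ from Lemma~\ref{lemma.2.32}, restricting to a subset $U'$ where both apply. The one caveat is that your appeal to Lemma~\ref{lemma:densities} does not by itself furnish the \emph{lower} bound \eqref{eq:lowerboundHn} at all scales below a uniform $r_0$ (that lemma controls upper densities, and positivity of the lower density is a separate hypothesis, as in Theorem~\ref{thm.1.1}); this is harmless here because the lemma leaves $U'$ unspecified and is only ever invoked on the sets $E_i$ of the main proof, where the lower bound is built into their definition.
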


%\begin{remark}
%{\color{red}It is always possible to assume that \eqref{eq:upperboundHn} holds for every $p,q \in E$ satisfying $d(p,q)<r$ and \eqref{eq:lowerboundHn} since we can shrink $\Omega$ as much as necessary.}
%\end{remark}

\begin{proof}
By \Cref{lemma:keyHn}, for every $p,q\in E$ with $d(p,q)<r$ we have
\[
\rho(V_p,V_q) \leq C d(p,q)^\alpha
\]
for some $C>0$. On the other hand, in \Cref{lemma.2.32} we proved that
\[
V_p \longmapsto \nu_j(p) \qquad \text{for } j=1,\ldots,2n+1-k
\]
are Lipschitz-continuous maps in an open set $\tilde \Omega \subset \Omega$ with $p \in \tilde \Omega$. In other words, there exists a positive constant $c'$ such that 
\[
d(\nu_j(p),\nu_j(q)) \leq c' \rho(V_p,V_q) \qquad \text{for all } p,q \in \tilde \Omega.
\]
Putting the inequalities above together, we get
\[
d(\nu_j(p),\nu_j(q)) \le \tilde{c} d(p,q)^\alpha \qquad \text{for all } p, q \in \tilde \Omega \cap E,
\]
where $\tilde{c} := C \cdot c'$. The conclusion follows by taking $\Omega':=\tilde{\Omega}\cap E$.
\end{proof}

\begin{lemma} \label{lemma.2.29}
Let $\alpha \in (0,1]$, $\lambda>0$, $p,q \in \bH^n$ and $V_p, V_q \in \cG(\bH^n,k)$ vertical subgroups satisfying the assumptions of \Cref{lemma:keyHn}. Then there exists $\lambda' > \lambda$ such that, if we also suppose
	\[
	q \notin Q_\alpha(p,V_p,\lambda'),
	\]
	then the following inclusion holds:
	\[
	B\left( q, r/2 \right) \cap C_\alpha^{r/2} \left(q\right) \subset B(p,2r) \setminus Q_\alpha \left(p,V_p, \lambda \right),
	\]
	where $r := d(p,q)$. Moreover, we can take any $\lambda'$ satisfying the inequality
	\beq\label{lambdapvalue}
	\lambda' \ge \frac{2^\alpha cC + (1+3^{1+\alpha})\lambda}{2^{1+\alpha}c},
	\eeq
	where $c$ and $C$ are the constants given in \Cref{prop:idsl} and \Cref{lemma:keyHn}.
\end{lemma}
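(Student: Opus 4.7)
The conclusion combines two facts: first, $B(q, r/2) \cap C_\alpha^{r/2}(q) \subset B(p, 2r)$, and second, this intersection lies outside $Q_\alpha(p, V_p, \lambda)$. The first is immediate from the triangle inequality $d(p, x) \le d(p, q) + d(q, x) \le r + r/2 < 2r$ for any $x \in B(q, r/2)$. The second, which is the real content, requires showing
\[
d(p^{-1}x, V_p) > \lambda \, d(p, x)^{1+\alpha}
\]
for every $x \in B(q, r/2) \cap C_\alpha^{r/2}(q)$, and it is here that the quantitative threshold \eqref{lambdapvalue} on $\lambda'$ enters.

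My plan is to push everything through the horizontal complement $S_p$ of $V_p$ provided by Lemma \ref{lemma.2.32}, so that Proposition \ref{prop:idsl} converts $d(\,\cdot\,, V_p)$ into the norm $\|\pi_{S_p}(\,\cdot\,)\|$ up to the uniform factor $c$. Because $V_p$ is normal in $\bH^n$, the projection $\pi_{S_p}$ is a group homomorphism onto the abelian horizontal subgroup $S_p$, so
\[
\pi_{S_p}(p^{-1}x) = \pi_{S_p}(p^{-1}q) \cdot \pi_{S_p}(q^{-1}x),
\]
and the reverse triangle inequality in $(S_p, \|\cdot\|)$, which reduces to its Euclidean counterpart, yields
\[
\|\pi_{S_p}(p^{-1}x)\| \ge \|\pi_{S_p}(p^{-1}q)\| - \|\pi_{S_p}(q^{-1}x)\|.
\]
Applying Proposition \ref{prop:idsl} on both sides reduces the problem to estimating the two summands on the right.

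For the first term, the hypothesis $q \notin Q_\alpha(p, V_p, \lambda')$ combined with the upper bound in Proposition \ref{prop:idsl} gives $\|\pi_{S_p}(p^{-1}q)\| \ge d(p^{-1}q, V_p) > \lambda' r^{1+\alpha}$. For the second term, the assumption $x \in C_\alpha^{r/2}(q)$ places $q^{-1}x$ within distance $\lambda(r/2)^{1+\alpha}$ of $V_q$, and the hypotheses of Lemma \ref{lemma:keyHn} ensure $\rho(V_p, V_q) \le C r^\alpha$. Concretely, take $v_q \in V_q$ almost realizing $d(q^{-1}x, V_q)$; then $\|v_q\| \le d(q, x) + d(q^{-1}x, v_q) \le r/2 + \lambda(r/2)^{1+\alpha}$, and the homogeneity of $\pi_{V_p}$ together with the definition of $\rho$ produces $d(v_q, V_p) \le \|v_q\|\, \rho(V_p, V_q) \le \|v_q\|\, C r^\alpha$. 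A triangle inequality followed by the lower bound in Proposition \ref{prop:idsl} then yields an upper estimate of the form
\[
\|\pi_{S_p}(q^{-1}x)\| \le \frac{1}{c}\left(\lambda (r/2)^{1+\alpha} + \frac{C r^{1+\alpha}}{2}\right)
\]
up to lower-order terms that are absorbable by $r \le 1$.

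Assembling these estimates and using $d(p, x) \le 3r/2$ to bound $\lambda d(p, x)^{1+\alpha} \le \lambda \, 3^{1+\alpha} r^{1+\alpha}/2^{1+\alpha}$, the desired inequality $d(p^{-1}x, V_p) > \lambda \, d(p, x)^{1+\alpha}$ reduces to the algebraic condition
\[
c \lambda' \; > \; \frac{(1 + 3^{1+\alpha})\lambda}{2^{1+\alpha}} + \frac{cC}{2},
\]
which is exactly \eqref{lambdapvalue}. The main subtlety I expect is the cross-projection estimate: comparing $\pi_{S_p}$ with $\pi_{S_q}$ forces a detour through the auxiliary point $v_q \in V_q$ and an invocation of the Hölder comparison of vertical subgroups from Lemma \ref{lemma:keyHn}, and carefully tracking the resulting error terms is what produces the universal constant $C$ in the final threshold on $\lambda'$.
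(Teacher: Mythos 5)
Your argument is essentially the paper's: decompose $\pi_{V_p^\perp}(p^{-1}x)=\pi_{V_p^\perp}(p^{-1}q)\cdot\pi_{V_p^\perp}(q^{-1}x)$ using that the projection onto the horizontal complement is a homomorphism, bound the first factor from below via $q\notin Q_\alpha(p,V_p,\lambda')$ and Proposition \ref{prop:idsl}, and control the $V_p$-versus-$V_q$ discrepancy with the H\"older estimate $\rho(V_p,V_q)\le Cr^\alpha$ from Lemma \ref{lemma:keyHn} — the paper merely handles that discrepancy by inserting $\pi_{V_q^\perp}(q^{-1}x)\cdot\pi_{V_q^\perp}^{-1}(q^{-1}x)$ and estimating the resulting cross term directly, rather than through your auxiliary point $v_q\in V_q$. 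The one caveat is constant-tracking: your detour through $v_q$ produces an extra higher-order term $C\lambda r^{\alpha}(r/2)^{1+\alpha}$ that is not truly negligible for the stated formula, and your $C$-term survives as $C/2$ rather than $cC/2$ (it passes through the factor $1/c$ before being multiplied back by $c$), so you obtain the qualitative conclusion but with a slightly larger admissible threshold than the exact value \eqref{lambdapvalue}.
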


\begin{proof}
	Let $z \in B(q, r/2) \cap C_\alpha^{r/2}(q)$. Using the triangular inequality, we have
	\[
	\left[d(p,z)\right]^{1+\alpha} \leq \left[ d(p,q) + d(q,z) \right]^{1+\alpha} \leq \left( \frac{3}{2} \right)^{1+\alpha} r^{1+\alpha}.
	\]
	Therefore, to prove that $z$ does not belong to the paraboloid $Q_\alpha \left(p,V_p, \lambda \right)$, we only need to estimate $d(p^{-1}z,V_p)$ from below. We start by noticing that
	\beq \label{eq.k1}
	q \notin Q_\alpha(p,V_p,\lambda') \implies d(p^{-1}q,V_p) > \lambda' r^{1+\alpha},
	\eeq
	and, similarly, also that
	\beq \label{eq.k2}
	z \in C_\alpha^{r/2}(q) \implies d(q^{-1}z,V_q) \le \lambda \left( \frac{r}{2} \right)^{1+\alpha}.
	\eeq
	Since $V_p$ is a vertical homogeneous subgroups, by \Cref{prop:idsl} we have
	\[
	d(p^{-1}z,V_p) \ge c \| \pi_{V_p^\perp}(p^{-1}z)\|,
	\]
	where $c$ is a constant that only depends on the dimension of $V_p$. Now, taking into account that $V_p^\perp$ and $V_q^\perp$ are horizontal subgroups, we have
	\[ \begin{aligned}
		\| \pi_{V_p^\perp}(p^{-1}z)\| & = \left\| \pi_{V_p^\perp}(p^{-1}q) \cdot \pi_{V_q^\perp}(q^{-1}z) \cdot \pi_{V_q^\perp}^{-1}(q^{-1}z)\cdot \pi_{V_p^\perp}(q^{-1}z)\right\|
		\\ &  \ge \|  \pi_{V_p^\perp}(p^{-1}q) \| - \|  \pi_{V_q^\perp}(q^{-1}z) \| -  \| \pi_{V_q^\perp}^{-1}(q^{-1}z)\cdot \pi_{V_p^\perp}(q^{-1}z) \|.
	\end{aligned}\]
	The last term can easily be bounded by noticing that
	\[
	\| \pi_{V_q^\perp}^{-1}(q^{-1}z)\cdot \pi_{V_p^\perp}(q^{-1}z) \|  \le \rho(V_p,V_q) d(q,z)  \le (C r^\alpha) \frac{r}{2} = \frac{C}{2} r^{1+\alpha},
	 \]
	where $C$ is the constant given in \Cref{lemma:keyHn}. If we now apply \Cref{prop:idsl} to the first two terms, we get the following inequality:
	\[ \begin{aligned}
		d(p^{-1}z,V_p) & \ge c \| \pi_{V_p^\perp}(p^{-1}z)\|
		\\ & \ge c \|  \pi_{V_p^\perp}(p^{-1}q) \| - c \|\pi_{V_q^\perp}(q^{-1}z)\| - c\|\pi_{V_q^\perp}^{-1}(q^{-1}z)\cdot \pi_{V_p^\perp}(q^{-1}z) \|
		\\ & \ge c d(p^{-1}q,V_p) - d(q^{-1}z,V_q) - \frac{C}{2}c r^{1+\alpha},.
	\end{aligned} \]
	Therefore, using both \eqref{eq.k1} and \eqref{eq.k2}, leads to
	\[
	d(p^{-1}z,V_p)  >\left[ c \lambda'  - \lambda 2^{-1-\alpha} - c \frac{C}{2} \right] r^{1+\alpha}
	\]
	and, as a consequence, the quantity
	\[
	\left[ c \lambda'  - \lambda 2^{-1-\alpha} - c \frac{C}{2} \right] r^{1+\alpha} - \lambda \left( \frac{3}{2} \right)^{1+\alpha} r^{1+\alpha}
	\]
	is non-negative if we take $\lambda'>\lambda$ exactly as in \eqref{lambdapvalue}, concluding the proof.
\end{proof}

\subsection{Proof of Theorem \ref{thm.1.1}} Following the strategy proposed in \cite{MSSC10}, we first need the uniqueness a.e. of approximate tangent paraboloids.

\begin{proposition}\label{prop:uniqaptan}
Let $E \subset \bH^n$ be $\H^{k_m}$-measurable with $\H^{k_m}(E)<\infty$, and let $A$ be the set of points of $E$ for which there is an approximate tangent parabolid of dimension $k$ and metric dimension $k_m$. Then the following holds:
\begin{enumerate}[label=(\alph*)]
\item $A$ is $\H^{k_m}$-measurable;
\item $E$ has an unique approximate tangent paraboloid $V_p$ at $\H^{k_m}$-a.e. $p \in A$;
\item the mapping $A \ni p \mapsto V_p \in \cG(\bH^n,k)$ is measurable.
\end{enumerate}
\end{proposition}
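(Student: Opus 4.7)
\smallskip
\noindent\textbf{Plan of proof.} I would follow the approach of \cite[Proposition~3.4]{MSSC10}, adapted to the paraboloid setting. Parts (a) and (c) are soft measurability statements; part (b) is the substantive step and will rely on the tube Lemma \ref{lemma:tube} combined with the a.e.\ upper density estimate of Lemma \ref{lemma:densities}. It is convenient to set
\[
\psi_\lambda(p,V,r) := r^{-k_m}\, \H^{k_m}\bigl(E \cap B(p,r) \setminus Q_\alpha(p,V,\lambda)\bigr),
\]
which is Borel measurable in $(p,V,r) \in \bH^n \times \cG(\bH^n,k) \times (0,\infty)$.

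For (a), the monotonicity $\lambda\mapsto Q_\alpha(p,V,\lambda)$ makes $\psi_\lambda$ decreasing in $\lambda$, so the defining property of an approximate tangent paraboloid reduces to requiring $\limsup_{r\to 0^+}\psi_\lambda(p,V,r)=0$ for every $\lambda \in \mathbb{Q}_{>0}$. The set $B\subset E\times \cG(\bH^n,k)$ of pairs $(p,V)$ satisfying this is Borel, and therefore $A = \pi_{\bH^n}(B)\cap \{p:\Theta^{\ast k_m}(E,p)>0\}$ is analytic, hence $\H^{k_m}$-measurable.

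For (b), I fix $p \in A$ where $\Theta^{\ast k_m}(E,p)\le 5^{k_m}$ by Lemma \ref{lemma:densities}; this excludes only a null subset of $A$. Suppose by contradiction that $V_1,V_2\in \mathrm{apPar}^{k_m}_{\bH}(E,p)$ with $\vartheta:=\rho(V_1,V_2)>0$. Using the inclusion $Q_\alpha(p,V_i,\lambda)\cap B(p,r)\subseteq p\cdot \cC(V_i,\lambda r^{1+\alpha})$ and Lemma \ref{lemma:tube}, I obtain a vertical subgroup $Z$ of linear dimension $k-1$ (hence metric dimension $k$) such that
\[
B(p,r)\cap Q_\alpha(p,V_1,\lambda)\cap Q_\alpha(p,V_2,\lambda)\subseteq B(p,r)\cap p\cdot \cC\bigl(Z, C_* r^{1+\alpha}/\vartheta\bigr).
\]
A Vitali-type covering of the right-hand side by balls of radius $s:=C_* r^{1+\alpha}/\vartheta$ centred at points of $E$ where the upper density bound holds, together with the covering-number estimate $N \lesssim (r/s)^k$ dictated by the metric dimension of $Z$, yields
\[
\H^{k_m}\bigl(E\cap B(p,r)\cap p\cdot \cC(Z,s)\bigr) \;\lesssim\; r^k\cdot s \;=\; r^{k_m}\cdot r^\alpha/\vartheta \;=\; o(r^{k_m}).
\]
On the other hand, the defining property of $V_1$ and $V_2$ forces $\H^{k_m}(E\cap B(p,r))$ to coincide, up to an $o(r^{k_m})$ error, with the measure of the intersection appearing above. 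Combining the two estimates gives $\H^{k_m}(E\cap B(p,r))=o(r^{k_m})$, contradicting $\Theta^{\ast k_m}(E,p)>0$.

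For (c), the uniqueness established in (b) identifies, up to an $\H^{k_m}$-null set, the graph $\{(p,V_p):p\in A\}$ with the Borel set $B$ from (a). Measurability of $p\mapsto V_p$ then follows either from a standard Kuratowski--Ryll-Nardzewski type selection theorem or, more concretely, from the observation that $V_p$ is the unique accumulation point in the compact space $\cG(\bH^n,k)$ of any sequence of approximate minimisers of $V\mapsto \sup_{\lambda\in\mathbb{Q}_{>0}}\limsup_{r\to 0^+}\psi_\lambda(p,V,r)$. The main obstacle I anticipate is the covering estimate in (b): the covering balls must be centred at points of $E$ where the upper density bound of Lemma \ref{lemma:densities} applies with a uniform constant, which typically requires a preliminary Egorov-type reduction $E=\bigcup_N E_N$ with $\H^{k_m}(E\cap B(q,r))\le N r^{k_m}$ for all $q\in E_N$ and $r<1/N$, then running the contradiction argument on each $E_N$ separately.
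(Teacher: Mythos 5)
Your plan is correct and is essentially the approach the paper itself takes: the paper proves this proposition only by deferring to the strategy of Proposition~3.9 of \cite{MSSC10}, which is exactly your combination of soft measurability (analytic projection and measurable selection) for (a) and (c) with the tube Lemma~\ref{lemma:tube} plus a covering/upper-density estimate for the uniqueness in (b), with cones replaced by paraboloids (so the tube width becomes $\sim r^{1+\alpha}/\vartheta$ and the contradiction comes from $\H^{k_m}(E\cap B(p,r))\lesssim r^{k_m}r^{\alpha}/\vartheta=o(r^{k_m})$). Your anticipated need for the preliminary decomposition $E=\bigcup_N E_N$ to get a uniform upper mass bound is indeed the right technical fix and mirrors what is done there.
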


The proof of this result goes along the same lines of \cite[Proposition 3.9]{MSSC10}, so we refer the reader to that paper for more details.

We are now ready to prove our main result. For this, we follow closely the strategy of \cite[Theorem 3.15]{MSSC10} and only point out the main differences in our case.

\begin{proof}[Proof of \Cref{thm.1.1}]
Since $\cH^{k_m}(E)<\infty$, by a standard density estimate (\Cref{lemma:densities})
\[
\Theta^{\ast k_m}(E,p)\le 5^{k_m} \qquad \text{for } \cH^{k_m}\text{-a.e. } p\in E,
\]
so we can assume without loss of generality that
\beq \label{uppdensitybd}
\cH^{k_m}(E\cap B(p,r))\le 
7^{k_m}r^{k_m}\qquad \text{for all } p\in E \text{ and } r>0.
\eeq
Using the positive lower density and \eqref{eq.paraboloidassumption}, we have that for $\H^{k_m}$-a.e. $p \in E$ we can find $\ell(p)>0$, $0<r(p)\le 1$ and $V_p = \mathrm{apPar}_\bH^{k_m}(E,p)$ such that
\beq \label{eq.3.17}
\cH^{k_m}\left(E \cap B(p,r) \right) > \ell(p) r^{k_m} \qquad \text{for all } 0 < r < r(p),
\eeq
and, for some $\lambda:=\lambda_p > 0$, also that
\[
\cH^{k_m} \left(E\cap B(p,r) \sm Q_\alpha(p,V_p,\lambda)\right)\le \eps r^{k_m}
\]
where $\eps < \ell(p)/(4^{k_m}+1)$. Moreover, since $B(p,r)\cap Q_\alpha(p,V_p,\lambda)\subset C^r_\alpha(p)$, we have
\beq\label{eps-cyluppbd}
\cH^{k_m} \left(E\cap B(p,r) \sm C^r_\alpha(p)\right)\le \eps r^{k_m}.
\eeq
Now consider for every $i \geq 1$ the set
\[
E_i := \left\{ p \in E \ : \ \min\{r(p),\ell(p)\} > \frac{1}{i} \right\},
\]
and define $E^\ast:=\cup_{i\ge 1} E_i$. Then clearly $\H^{k_m}(E\sm E^*)=0$, which means that it is enough to prove the result for the set $E_i$ for every $i\ge 1$.

Now recall that, by \Cref{lemma.2.32}, for any $p \in E^\ast$ we can find $2n+1-k$ unit vectors $\nu_h(p)$ in the horizontal bundle $\mathrm{H}\bH_p^n$ transversal to $V_p$ and such that
\[
T_p := \exp\left( \operatorname{span} \{\nu_1(p),\ldots,\nu_{2n+1-k}(p)\} \right)
\] 
is a horizontal subgroup of $\bH^n$ satisfying $\bH^n = V_p \cdot T_p$. Moreover, using the Lipschitz continuity (see \Cref{lemma.2.32}) and \Cref{prop:uniqaptan}, we obtain
\[
E^\ast = \bigcup_{j \ge 1} F_j,
\]
with $\cH^{k_m}(F_j) < \infty$ and $\nu_h \, \big|_{F_j}$ is $\H^{k_m}$-measurable. As a consequence, the map
\[
\nu_h : E^\ast \to \mathrm{H}\bH^n
\]
is a measurable sections of $\mathrm{H}\bH^n$ for every $1 \leq h \leq 2n+1-k$.

Now, following \cite{MSSC10}, we define for appropriate indexes and for $p \in E^\ast$ the function
\[
\rho_{i,h,j}(p) := \sup \left\{ \frac{|\langle \nu_h(p), \pi(p^{-1}q)\rangle|}{d(p,q)^{1+\alpha}} \ : \  q \in E_i, \ 0 < d(p,q) < \frac{1}{j}\right\},
\]
where $\pi : \bH^n \to \ch_1$ is the projection onto the first layer given by
\[
\pi(q) = \sum_{j = 1}^{n} (q_j X_j + q_{n+j} Y_j).
\]
For every couple of points $p,q\in E_i$, applying \Cref{lemma:keyHn} with $r:=d(p,q)$ we obtain that the map $p\mapsto V_p$ is $\alpha$-H\"{o}lder continuous when restricted to $E_i$, namely
\[
\rho(V_p,V_q)\le Cd(p,q)^\alpha\quad \text{ for every } p,q\in E_i,
\]
where $C:=C(n,\ell(p),\lambda)$ is the constant given in \Cref{lemma:keyHn}. Notice that this is possible because we use \eqref{uppdensitybd} to verify the assumption \eqref{eq:upperhypHn}. We now claim that, up to taking a larger aperture $\lambda'>\lambda$, we have that
\[
E_i \sm Q_\alpha(p,V_p,\lambda')=\varnothing.
\]
We argue by contradiction. Let $p,q\in E_i$ be such that the assumptions of \Cref{lemma.2.29} hold; then we have the inclusion
\[
B\left(q,\frac{r}{2}\right)\cap C^{r/2}_\alpha(q) \subset B(p,2r)\setminus Q_\alpha(p,V_p,\lambda),
\]
under the additional assumptions $q \notin Q_\alpha(p,V_p,\lambda')$ and $\lambda'$ satisfying \eqref{lambdapvalue}. Hence, using the estimates \eqref{eq.3.17} and \eqref{eps-cyluppbd}, we deduce that
\[ \begin{aligned}
(\ell(p)-\eps)(r/2)^{k_m}&\le\cH^{k_m}\left(E_i\cap B\left(q,r/2\right)\cap C^{r/2}_\alpha(q)\right)
\\ & \le \cH^{k_m}\left(E\cap B\left(p,2r\right)\setminus Q_\alpha(p,V_p,\lambda)\right)\le 2^{k_m}\eps^{k_m},
\end{aligned}\]
which is a contradiction since $\eps<\ell(p)/(4^{k_m}+1)$. This means that the claim holds and, by \Cref{lemma.2.32}, it follows that
\beq\label{Key-inclusion}
E_i \sm Q_\alpha(p,\N(\nu_h(p)),\lambda')=\varnothing \qquad \text{for all } 1\le h\le 2n+1-k.
\eeq
As a consequence of \eqref{Key-inclusion}, for every $i \ge 1$ and $1\le h \le 2n+1-k$ we have
\[
\lim_{j\to \infty}\rho_{i,h,j}(p) = 0.
\]
If we now apply Lusin theorem to each $\nu_h$ and Egoroff theorem to the corresponding sequence $\left(\rho_{i,h,j}\right)_{j \in \N}$, we obtain a decomposition of each $E_i$ given by
\[
E_i = E_{i,0} \bigcup \left(\bigcup_{\beta \geq 1} K_{i,\beta}\right),
\]
where $E_{i,0}$ is $\H^{k_m}$-negligible, $K_{i,\beta}$ is compact, $\nu_h \, \big|_{K_{i,\beta}}$ is continuous and $\rho_{i,h,j}$ goes to zero uniformly in $K_{i,\beta}$ with respect to $j$. By \Cref{lemma.2.32.holder}, we also find that
\[
\nu_h \, \big|_{K_{i,\beta}}
\]
is $\alpha$-H\"{o}lder continuous and, by applying the Whitney theorem (see \Cref{Lemma:Whitney}) on $K_{i,\beta}$, we obtain the extended functions $f_{i,\beta,h} \in \mathbf{C}_H^{1,\alpha}(\bH^n)$ such that 
\[
f_{i,\beta,h} \, \big|_{K_{i,\beta}} = 0, \quad \nabla_H f_{i,\beta,h} \, \big|_{K_{i,\beta}} = \nu_h \quad \text{and} \quad |\nabla_H f_{i,\beta,h}| \neq 0 \text{ on } K_{i,\beta}.
\] 
Finally, notice that the set
\[
S_{i,\beta,h} := \{ p \in \bH^n \ : \ f_{i,\beta,h}(p) = 0,\ |\nabla_H f_{i,\beta,h}| \neq 0 \}
\]
is a $1$-codimensional $(\mathbf{C}_H^{1,\alpha},\bH)$-regular surface containing $K_{i,\beta}$; thus, we can consider the following intersection:
\[
S_{i,\beta} := \bigcap_{h = 1}^{2n+1-k} S_{i,\beta,h}.
\]
By \Cref{prpps23} we have that $S_{i,\beta}$ is a $k$-codimensional $(\mathbf{C}_H^{1,\alpha},\bH)$-regular surface that contains the set $K_{i,\beta}$. Moreover, we have
\[
E \subset E_0 \cup (\cup_{i \geq 1} \cup_{\beta \geq 1} S_{i,\beta}),
\]
with $E_0=(E\sm E^*)\cup(\cup_{i \ge 1} E_{i,0})$ and $\cH^{k_m}(E_0)=0$. Hence $E$ is $C^{1,\alpha}$-rectifiable.
\end{proof}

\section{Extension to Carnot groups} \label{sec:carnot}

Let $\G=H^1\oplus\cdots\oplus H^\iota$ be a Carnot group of step $\iota$, dimension $q$, and homogeneous dimension given by
\[
Q = \sum_{j = 1}^\iota j \cdot \dim (H^j).
\]
In analogy with the Heisenberg case, we say that a homogeneous subgroup $T \subset \G$ of codimension $k \leq \dim(H^1)$ is {\em vertical} if
\[
T = T_H \oplus H^2 \oplus \cdots \oplus H^\iota,
\]
where $T_H \subset H^1$ has dimension $\dim(H^1) - k$. As a consequence, the definition of the Grassmannian (\Cref{def.grass}) can be immediately extended.

\begin{remark}
This extension leads to an immediate issue. Indeed, if $S$ is a vertical subgroup of codimension at least $2$, then there is no guarantee that a {\bfseries horizontal} complement exists (so $S$ may not belong to the Grassmannian).
\end{remark} 

\begin{remark}\label{rmk.ext}
In the special case of codimension one, a horizontal complement always exists since it is generated by a single element. Thus, if we replace
\[
\text{$k_m$ with $Q-1$ and $k$ with $q-1$},
\]
then every result in this paper, except for \Cref{lemma.2.32.holder}, can be proved following the same exact argument. Luckily, the exception is trivial since the distance between horizontal complements is given by the distance between the two generating elements, which can be chosen using the orthogonality condition.
\end{remark}

%\addtocontents{toc}{\protect\setcounter{tocdepth}{1}}
\subsection*{Acknowledgements.} %% To write if needed
The authors would like to thank Giacomo Del Nin for fruitful discussions on the subject and  Davide Vittone for pointing out the one-codimensional case for Carnot groups and also for valuable suggestions regarding the original draft.

\addtocontents{toc}{\protect\setcounter{tocdepth}{2}}

\bibliography{bibliography}

@article{AmbKir2000Rect,
	Author = {Ambrosio, L. and Kirchheim, B.},
	Journal = {Math. Ann.},
	Number = {3},
	Pages = {527--555},
	Title = {Rectifiable sets in metric and {B}anach spaces},
	Year = {2000},
	Volume = {{\bf 318}},
}

@Article{AnzSer,
 Author = {G. {Anzellotti} and R. {Serapioni}},
 Title = {{\({\mathcal C}^ k\)-rectifiable sets}},
 FJournal = {{Journal f\"ur die Reine und Angewandte Mathematik}},
 Journal = {{J. Reine Angew. Math.}},
 ISSN = {0075-4102},
 Volume = {453},
 Pages = {1--20},
 Year = {1994},
 Publisher = {De Gruyter, Berlin},
 MSC2010 = {49Q15},
 Zbl = {0799.49028}
}

@article {San19,
    AUTHOR = {Santilli, Mario},
     TITLE = {Rectifiability and approximate differentiability of higher
              order for sets},
   JOURNAL = {Indiana Univ. Math. J.},
  FJOURNAL = {Indiana University Mathematics Journal},
    VOLUME = {68},
      YEAR = {2019},
    NUMBER = {3},
     PAGES = {1013--1046},
      ISSN = {0022-2518},
       DOI = {10.1512/iumj.2019.68.7645},
       URL = {https://doi.org/10.1512/iumj.2019.68.7645},
}

@article{Pan89,
	Author = {Pansu, P.},
	Coden = {ANMAAH},
	Doi = {10.2307/1971484},
	Fjournal = {Annals of Mathematics. Second Series},
	Issn = {0003-486X},
	Journal = {Ann. of Math. (2)},
	Number = {1},
	Pages = {1--60},
	Title = {M\'etriques de {C}arnot-{C}arath\'eodory et quasiisom\'etries des espaces sym\'etriques de rang un},
	Url = {http://dx.doi.org/10.2307/1971484},
	Volume = {{\bf 129}},
	Year = {1989},
	Bdsk-Url-1 = {http://dx.doi.org/10.2307/1971484}}

@book {CDPT07,
    AUTHOR = {Capogna, L. and Danielli, D. and Pauls, S.D. and
              Tyson, J.T.},
     TITLE = {An introduction to the {H}eisenberg group and the
              sub-{R}iemannian isoperimetric problem},
    SERIES = {Progress in Mathematics},
    VOLUME = {{\bf 259}},
 PUBLISHER = {Birkh\"{a}user Verlag, Basel},
      YEAR = {2007},
     PAGES = {xvi+223},
      ISBN = {978-3-7643-8132-5; 3-7643-8132-9},
}

@book{BLU2007stratified,
	Author = {Bonfiglioli, A. and Lanconelli, E. and Uguzzoni, F.},
	Isbn = {978-3-540-71896-3; 3-540-71896-6},
	Pages = {xxvi+800},
	Publisher = {Springer, Berlin},
	Series = {Springer Monographs in Mathematics},
	Title = {Stratified {L}ie groups and potential theory for their sub-{L}aplacians},
	Year = {2007}}

@book{FS82,
	Author = {Folland, G.B. and Stein, E.M.},
	Publisher = {Princeton University Press},
	Title = {Hardy {S}paces on {H}omogeneous {G}roups},
	Year = {1982}}

@article{kirchheim1994rectifiable,
  title={Rectifiable metric spaces: local structure and regularity of the {H}ausdorff measure},
  author={Kirchheim, B.},
  journal={Proc. Amer. Math. Soc.},
  volume={{\bf 121}},
  number={1},
  pages={113--123},
  year={1994}
}

@article{AntLeD19,
  author = {Antonelli, G. and Le Donne, E.},
  title = {Pauls rectifiable and purely {P}auls unrectifiable smooth hypersurfaces},
  journal = {Nonlinear Anal.},
  year = {2020},
  volume = { {\bf 200}},
  Doi = {https://doi.org/10.1016/j.na.2020.111983}
}

@article{chousionis2019boundednes,
  title={Boundedness of singular integrals on ${C}^{1, \alpha}$ intrinsic graphs in the {H}eisenberg group},
  author={Chousionis, V. and F{\"a}ssler, K. and Orponen, T.},
  journal={Adv. Math.},
  volume={{\bf 354}},
  year={2019},
  publisher={Elsevier}
}

@Article{DS91,
  author     = {David, G. and Semmes, S.},
  title      = {Singular integrals and rectifiable sets in {${\bf R}^n$}: {B}eyond {L}ipschitz graphs},
  journal    = {Ast\'{e}risque},
  year       = {1991},
  Volume     = {{\bf 193}},
  issn       = {0303-1179},
  fjournal   = {Ast\'{e}risque},
}

@book{DS93,
  title={Analysis of and on {U}niformly {R}ectifiable {S}ets},
  author= {David, G. and Semmes, S.},
  isbn={9780821815373},
  lccn={lc93036311},
  series={Lectures on Mathematics in the Life Sciences},
  url={https://books.google.it/books?id=MpXyBwAAQBAJ},
  year={1993},
  publisher={American Mathematical Society}
}

@book{DeLellis08,
	Author = {De Lellis, C.},
	Doi = {10.4171/044},
	Isbn = {978-3-03719-044-9},
	Mrclass = {28-02 (28A75 49Q15)},
	Mrnumber = {2388959 (2009j:28001)},
	Mrreviewer = {Andrew Lorent},
	Pages = {vi+127},
	Publisher = {European Mathematical Society (EMS), Z\"urich},
	Series = {Zurich Lectures in Advanced Mathematics},
	Title = {Rectifiable sets, densities and tangent measures},
	Url = {http://dx.doi.org/10.4171/044},
	Year = {2008},
	Bdsk-Url-1 = {http://dx.doi.org/10.4171/044}
}

@article {ninidu1,
    AUTHOR = {Del Nin, Giacomo and Idu, Kennedy Obinna},
     TITLE = {Geometric criteria for {$C^{1,\alpha}$}-rectifiability},
   JOURNAL = {J. Lond. Math. Soc. (2)},
    VOLUME = {105},
      YEAR = {2022},
    NUMBER = {1},
     PAGES = {445--468},
      ISSN = {0024-6107},
}

@book{Federer69,
	Author = {Federer, H.},
	Publisher = {Springer},
	Title = {Geometric Measure Theory},
	Year = {1969}}

@article{Federer1978Colloquium,
	Author = {Federer, H.},
	Doi = {10.1090/S0002-9904-1978-14462-0},
	Fjournal = {Bulletin of the American Mathematical Society},
	Issn = {0002-9904},
	Journal = {Bull. Amer. Math. Soc.},
	Number = {3},
	Pages = {291--338},
	Title = {Colloquium lectures on geometric measure theory},
	Url = {https://doi.org/10.1090/S0002-9904-1978-14462-0},
	Volume = {{\bf 84}},
	Year = {1978},
	Bdsk-Url-1 = {https://doi.org/10.1090/S0002-9904-1978-14462-0},
	Bdsk-Url-2 = {http://dx.doi.org/10.1090/S0002-9904-1978-14462-0}}

@article{FSSC4,
	Author = {Franchi, B. and Serapioni, R. and Serra Cassano, F.},
	Journal = {Comm. Anal. Geom.},
	Pages = {909--944},
	Title = {Regular hypersurfaces, intrinsic perimeter and implicit function theorem in {C}arnot groups},
	Volume = {{\bf 5}},
	Year = {2003}}

@article{FSSC6,
	Author = {Franchi, B. and Serapioni, R. and Serra Cassano, F.},
	Doi = {10.1016/j.aim.2006.07.015},
	Fjournal = {Advances in Mathematics},
	Issn = {0001-8708},
	Journal = {Adv. Math.},
	Number = {1},
	Pages = {152--203},
	Title = {Regular submanifolds, graphs and area formula in {H}eisenberg groups},
	Url = {https://doi.org/10.1016/j.aim.2006.07.015},
	Volume = {{\bf 211}},
	Year = {2007},
	Bdsk-Url-1 = {https://doi.org/10.1016/j.aim.2006.07.015},
	Bdsk-Url-2 = {http://dx.doi.org/10.1016/j.aim.2006.07.015}}

@article{FSSC11,
	Author = {Franchi, B. and Serapioni, R. and Serra Cassano, F.},
	Journal = {J. Geom. Anal.},
	Number = {4},
	Pages = {1044--1084},
	Title = {Differentiability of {I}ntrinsic {L}ipschitz {F}unctions within {H}eisenberg {G}roups},
	Volume = {{\bf 21}},
	Year = {2011},
	Bdsk-Url-1 = {https://doi.org/10.1007/s12220-010-9178-4}}

@article{FSSC01,
	Author = {Franchi, B. and Serapioni, R. and Serra Cassano, F.},
	Date-Added = {2019-03-28 17:57:48 +0000},
	Date-Modified = {2019-03-28 17:59:47 +0000},
	Journal = {Math. Ann.},
	Pages = {479--531},
	Title = {Rectifiability and perimeter in the {H}eisenberg group},
	Volume = {{\bf 321}},
	Year = {2001}}

@article{FranchiSerapioni2016IntrLip,
	Author = {Franchi, B. and Serapioni, R.},
	Doi = {10.1007/s12220-015-9615-5},
	Fjournal = {Journal of Geometric Analysis},
	Issn = {1050-6926},
	Journal = {J. Geom. Anal.},
	Number = {3},
	Pages = {1946--1994},
	Title = {Intrinsic {L}ipschitz graphs within {C}arnot groups},
	Url = {https://doi.org/10.1007/s12220-015-9615-5},
	Volume = {{\bf 26}},
	Year = {2016},
	Bdsk-Url-1 = {https://doi.org/10.1007/s12220-015-9615-5},
	Bdsk-Url-2 = {http://dx.doi.org/10.1007/s12220-015-9615-5}}

@article{idumama1,
title = {Characterizations of $k$-rectifiability in homogeneous groups},
journal = {J. Math. Anal. Appl.},
volume = {{\bf 500}},
number = {2},
year = {2021},
issn = {0022-247X},
doi = {https://doi.org/10.1016/j.jmaa.2021.125120},
url = {https://www.sciencedirect.com/science/article/pii/S0022247X21001992},
author = {Idu, K.O. and Magnani, V. and Maiale, F.P.}
}

@book{mattila,
	Address = {Cambridge},
	Author = {Mattila, P.},
	Isbn = {0-521-46576-1; 0-521-65595-1},
	Note = {Fractals and rectifiability},
	Pages = {xii+343},
	Publisher = {Cambridge University Press},
	Series = {Cambridge Studies in Advanced Mathematics},
	Title = {Geometry of sets and measures in {E}uclidean spaces},
	Volume = {{\bf 44}},
	Year = {1995}}

@article{MSSC10,
	Author = {Mattila, P. and Serapioni, R. and Serra Cassano, F.},
	Fjournal = {Annali della Scuola Normale Superiore di Pisa. Classe di Scienze. Serie V},
	Issn = {0391-173X},
	Journal = {Ann. Sc. Norm. Super. Pisa Cl. Sci. (5)},
	Number = {4},
	Pages = {687--723},
	Title = {Characterizations of intrinsic rectifiability in {H}eisenberg groups},
	Volume = {{\bf 9}},
	Year = {2010}}

@article{pauls2004notion,
  title={A notion of rectifiability modeled on {C}arnot groups},
  author={Pauls, S.D.},
  journal={Indiana Univ. Math. J.},
  pages={49--81},
volume={{\bf 53}},
  year={2004},
  publisher={JSTOR}
}

@article {vodpup,
    AUTHOR = {Vodopyanov, S.K. and Pupyshev, I.M.},
     TITLE = {Whitney-type theorems on the extension of functions on
              {C}arnot groups},
   JOURNAL = {Sibirsk. Mat. Zh.},
  FJOURNAL = {Rossi\u{\i}skaya Akademiya Nauk. Sibirskoe Otdelenie. Institut
              Matematiki im. S. L. Soboleva. Sibirski\u{\i} Matematicheski\u{\i}
              Zhurnal},
    VOLUME = {{\bf 47}},
      YEAR = {2006},
    NUMBER = {4},
     PAGES = {731--752},
      ISSN = {0037-4474},
       DOI = {10.1007/s11202-006-0072-z},
       URL = {https://doi.org/10.1007/s11202-006-0072-z}
}

\end{document}